\theoremstyle{definition}
\newtheorem{assumption}{Assumption}[section]
\newtheorem{definition}{Definition}[section]
\newtheorem{prop}{Proposition}[section]
\newtheorem{lemma}{Lemma}[section]
\newtheorem{thm}{Theorem}[section]
\newtheorem{corr}{Corollary}[section]
\theoremstyle{remark}
\title{The random periodic solution of a stochastic differential equation with a monotone drift and its numerical approximation}
\author{
  Yue Wu \\
  Mathematical Institute\\
  University of Oxford\\
  Oxford,  OX2 6GG, UK\\ 
  Alan Turning Institute\\
  London, UK\\
  \texttt{yue.wu@maths.ox.ac.uk}  \\
  %% examples of more authors
  %% \AND
  %% Coauthor \\
  %% Affiliation \\
  %% Address \\
  %% \texttt{email} \\
  %% \And
  %% Coauthor \\
  %% Affiliation \\
  %% Address \\
  %% \texttt{email} \\
  %% \And
  %% Coauthor \\
  %% Affiliation \\
  %% Address \\
  %% \texttt{email} \\
}
\begin{document}
\maketitle

\begin{abstract}
In this paper we study the existence and uniqueness of the random periodic solution for a stochastic differential equation with an one-sided Lipschitz condition (also known as monotonicity condition) and the convergence of its numerical approximation via the backward Euler-Maruyama method. The existence of the random periodic
solution is shown as the limits of the pull-back flows of the SDE and the discretized SDE respectively. We establish a convergence rate of the strong error for the backward Euler-Maruyama method and obtain the weak convergence result for the approximation of the periodic measure.
\end{abstract}

% keywords can be removed
\keywords{Random periodic solution \and Monotone drift \and Backward Euler-Maruyama method \and Periodic measure}

\section{Introduction}
Periodicity is widely exhibited in a large number of natural phenomena like oscillations, waves, or even lying behind many complicated ensembles such as biological and economic systems. However, periodic behaviours are often found to be subject to random perturbation or under the influence of noise. Physicists have attempted to study random perturbations to periodic solutions for some time by considering a first linear approximation or asymptotic expansions in small noise regime, but this approach restricted its applicability to the small fluctuation (c.f. Van Kampen \cite{van2007}, Weiss and Knoblock \cite{weiss2009}). It was only until recently that the random periodic solution was endowed with a proper definition (c.f. Zhao and Zheng \cite{zhao2009}, Feng, Zhao and Zhou \cite{rpssde2011}), which is compatible with definitions of both the stationary solution (also termed as random fixed points) and the deterministic periodic solution. It gives a rigorous and clearer understanding to physically interesting problems of certain random phenomena with a periodic nature, and also represents a long time limit of the underlying random dynamical system. 

Let us recall the definition of the random periodic solution for stochastic semi-flows given in \cite{rpssde2011}.
Let $H$ be a separable Banach space. Denote by $(\Omega,{\cal F},\mathbb{P},(\theta_s)_{s\in \mathbb{R}})$ a metric dynamical system and $\theta_s:\Omega\to\Omega$ is  assumed to be measurably invertible for all $s\in \mathbb{R}$.
Denote $\Delta:=\{(t,s)\in \mathbb{R}^2, s\leq t\}$. Consider a stochastic semi-flow $u: \Delta \times \Omega\times H\to H$, which satisfies the following standard condition
\begin{eqnarray}{\label{16}}
u(t,r,\omega)=u(t,s,\omega)\circ u(s,r,\omega),\ \ {\rm for\ all } \ r\leq s\leq t,\ r, s,t\in \mathbb{R},\ \mbox{for }a.e.\ \omega\in\Omega.
\end{eqnarray}
We do not assume the map $u(t,s,\omega): H\to H$ to be invertible for $(t,s)\in \Delta,\ \omega \in \Omega$. 

\begin{definition}
\label{feng-zhao1}
A {\bf random periodic path of period $\tau$ of the semi-flow} $u: \Delta\times \Omega\times H\to H$
 is an ${\cal F}$-measurable
map $y:\mathbb{R}\times \Omega\to H$ such that
\begin{eqnarray}
\Big\{\begin{array}{l}u(t,s, y(s,\omega), \omega)=y(t,\omega),\ \ \forall t\geq s\\

y(s+\tau,\omega)=y(s, \theta_\tau \omega),\ \ \forall s\in  \mathbb{R}
                                     \end{array}
\end{eqnarray}
for any $\omega\in \Omega$.
\end{definition}

Building on this new concept, there have been more recent progresses towards understanding the random periodicity of various stochastic systems. The existence of random periodic solutions to stochastic differential equations (SDEs) and stochastic partial differential equations (SPDEs) are initially studied in \cite{rpssde2011} and \cite{rpsspde2011}, with additive noise. Instead of following the traditional geometric method of establishing the Poincar\'e mapping, a new analytical method for coupled infinite horizon forward-backward integral equations is introduced. It was then followed by the study on the anticipating random periodic solutions (c.f. Feng, Wu and Zhao: \cite{wu2016} and \cite{wu2018}) and the random periodicity of the stochastic functional differential equations (c.f. Feng, Luo and Zhao \cite{luo2014}). Regarding applications, Chekroun, Simonnet and Ghil \cite{chekroun2011} employed random periodic results to climate dynamics, and Wang \cite{wang2014} observed random peridicity behaviour in the study of birfurcations of stochastic reaction diffusion equations. 

An alternative approach to understand random periodic behaviours of SDEs is to study periodic measures which describe periodicity in the sense of distributions (c.f. Has'minskii \cite{minskii1980}). There are a few works in the literature attempting to study statistical solutions of certain types of SDEs with periodic forcings. This was motivated in the context of studying the climate change problem when the seasonal cycle is taken into considerations (c.f. Gershgorin and Majda \cite{gershgorin2010}, Majda and Wang \cite{majda2010}), in the context of the Brusselator arising in chemical reactions and Ornstein-Uhlenbeck processes (c.f. Scheutzow \cite{scheutzow1986}). It's worth noticing that random periodic solutions give rise to periodic measures (c.f. Feng and Zhao \cite{feng2020}), which is defined as follows.
\begin{definition}\label{def:measure}
Let $\mathcal{P}(\mathbb{R}^d)$ denote all probability measures on $\mathbb{R}^d$. The measure function $\rho_{\cdot}:\mathbb{R}\to \mathcal{P}(\mathbb{R}^d)$ is called a \textbf{periodic measure} if it satisfies for any
$s \in \mathbb{R}$, $t \geq 0$, and $\Gamma\in \mathcal{B}(\mathbb{R}^d)$,
\begin{equation}
    \rho_{s+\tau}=\rho_{s},\ \ \int_{\mathbb{R}^d}P(t + s, s, x, \Gamma)\rho_s(\mathrm{d}x)=\rho_{t+s}(\Gamma),
\end{equation}
where the transition probability of the semi-flow $u$ is set to be $P(t + s, s, \xi, \Gamma):=P(\{\omega:u(t+s,s,\omega)\xi\in \Gamma\})$.
\end{definition}
Conversely, from a periodic measure one can construct an enlarged probability space and random periodic process whose law is the periodic measure. It was then proved that the strong law of large numbers (SLLN) 
holds for periodic measures and corresponding random periodic processes. 

In general, random periodic solutions cannot be solved explicitly. One may treat the numerical approximation that stay sufficient close to the true solution as a good substitute to study stochastic dynamics. It is worth mentioning here that this is a numerical approximation of an
infinite time horizon problem. The classical numerical approaches including the Euler-Marymaya method and a modified Milstein method to simulate random period solutions of a dissipative system with global Lipchitz condition have been investigated in \cite{rpsnumerics2017}, which is the first paper that numerical schemes were used to approximate the random period trajectory. 

In this paper, we study the random periodic solutions of stochastic differential equations with weakened conditions on the drift term compared to \cite{rpsnumerics2017}, and simulate them via the backward Euler-Maruyama method. Here $H=\mathbb{R}^d$. Let $W \colon \mathbb{R} \times \Omega \to \mathbb{R}^d$ be a
standard two-sided Wiener process on the probability space $(\Omega, \mathcal{F}, \mathbb{P})$, with the filtration defined by $\mathcal{F}_s^{t}:=\sigma\{W_u-W_v:s<v\le u<t\}$ and $\mathcal{F}^{t}=\mathcal{F}^{t}_\infty=\vee_{s\le t}\mathcal{F}^{t}_s$. Throughout this paper, we shall use $|\cdot|$ for the Euclidean norm, $\|u\|:=\sqrt{\mathbb{E}[|u|^2]}$ and $\|u\|_p:=\sqrt[p]{\mathbb{E}[|u|^p]}$. We are interested in the $\mathbb{R}^d$-valued random periodic solution to a SDE of the form
\begin{align}
  \label{eq:SPDE}
  \begin{cases}
    \mathrm{d}{X^{t_0}_t} = 
    \big[-A X^{t_0}_t + f(t,X^{t_0}_t) \big] \mathrm{d}{t}+g(t)\mathrm{d}{W_t},& \quad \text{for } t \in (t_0,T],\\
    X^{t_0}_{t_0} = \xi,& 
  \end{cases}
\end{align}
where $\xi$ is a $\mathcal{F}^{t_0}$-measurable random initial condition. In addition, $A$, $f$, and $g$, and $\xi$ satisfy the following assumptions:
\begin{assumption}
  \label{as:A}
  The linear operator $A \colon \mathbb{R}^d \to \mathbb{R}^d$ is densely defined,
  self-adjoint, and positive definite with compact inverse.
\end{assumption}

Assumption \ref{as:A} implies the existence of a positive, increasing
sequence $(\lambda_i)_{i\in \mathbb{N}} \subset \mathbb{R}$ such that 
$0<\lambda_1 \le \lambda_2 \le \ldots \lambda_d$, and of an orthonormal basis $(e_i)_{i\in
\mathbb{N}}$ of $ \mathbb{R}^d$ such that $A e_i = \lambda_i  e_i$ for every $i \in
[d]$, where $[d]:=\{1,\ldots,d\}$.

\begin{assumption}
  \label{as:f}
  The mapping $f \colon \mathbb{R} \times \mathbb{R}^d \to \mathbb{R}^d$ is continuous and periodic in time with period $\tau$.
  Moreover, there exists a $C_f \in
  (0,\infty)$ such that 
  \begin{align*}
   & \langle u_1-u_2, f(t,u_1) - f(t,u_2)  \rangle \le C_f |u_1 - u_2|^2\\
    &  \langle u, f(t,u) \rangle \le C_f (1+|u|^2)
  \end{align*}
  for all $u,u_1, u_2 \in \mathbb{R}^d$ and $t \in [0,\tau)$.
\end{assumption}

\begin{assumption}
  \label{as:g}
  The diffusion coefficient functions $g \colon  \mathbb{R} \to \mathbb{R}$ is continuous and periodic in time with period $\tau$. Moreover, we assume there exists a constant $\sigma >0$ such that $\sup_{s\in [0,\tau)}|g(s)|<\sigma$ and $|g(t_1)-g(t_2)|\leq \sigma |t_2-t_1|$ for all $t_1,t_2 \in [0,\tau)$.
\end{assumption}

It is well known that under these assumptions the \emph{ solution} $X_{\cdot}^{t_0} \colon [t_0,T] \times
\Omega \to \mathbb{R}^d$ to \eqref{eq:SPDE} is uniquely determined by the
variation-of-constants formula 
\begin{align}
  \label{eq:mild}
  X^{t_0}_t(\xi) = e^{-A(t-t_0)} \xi + \int_{t_0}^t e^{-A(t-s)} f(s,X^{t_0}_s) \mathrm{d}{s} + \int_{t_0}^t e^{-A(t-s)}g(s)
  \mathrm{d}{W_s}.
\end{align}

\subsection{The pull-back}
  We know there exists a standard $\mathbb{P}$-preserving ergodic Wiener shift $\theta$ such that $\theta_t (\omega)(s)=W_{t+s}-W_s$ for $s,t\in \mathbb{R}$, ie, $$\mathbb{P}\circ (\theta_t W_s)^{-1}=\mathbb{P}\circ (W_{t+s}-W_s)^{-1}.$$
 Due to being non-autonomous, $X$ does not satisfy the cocycle property \cite{arnold1995}. But we are able to verify that the $u(t,t_0): \Omega\times\mathbb{R}^d \to  \mathbb{R}^d$ given by $u(t,t_0)\xi=X_{t}^{t_0}(\xi)$ satisfies the semi-flow property and periodic property in Definition \ref{feng-zhao1}. Denote by $X^{-k\tau}_t(\xi,\omega)
$ the solution starting from time $-k\tau$. We will show that when $k\to \infty$, the
pull-back $X^{-k\tau}_t(\xi)$ has a limit $X^*
_t$ in $L^2(\Omega)$ and $X^*
_t$ is the random periodic solution of SDE \eqref{eq:SPDE}, satisfying
\begin{equation}
  \label{eq:limit}
  X^*_t =  \int_{-\infty}^t e^{-A(t - s)} f(s,X^{*}_s) \mathrm{d}{s} + \int_{-\infty}^t e^{-A(t - s)} g(s)
  \mathrm{d}{W_s}.
\end{equation}
To achieve it, we need additional assumptions on $\xi$ and $f$.
\begin{assumption}
  \label{as:f_constant}
  $C_f<\lambda_1$.
\end{assumption}
\begin{assumption}
  \label{as:ini}
  There exists a constant $C_\xi$ such that $\|\xi\|<C_\xi$.
\end{assumption}
\begin{assumption}
  \label{as:f_tangent}
  There exists a constant $\hat{C}_f$ such that $\Big|f(t,u)-\frac{\langle f(t,u),u\rangle}{|u|^2}u\Big|\leq \hat{C}_f(1+|u|)$ for $u\in\mathbb{R}^d, t\in[0,\tau)$.
\end{assumption}

Assumption \ref{as:f_tangent} together with Assmption \ref{as:A} to \ref{as:g} ensures the existence of a global semiflow generated from SDE \eqref{eq:SPDE} with additive noise \cite{scheutzow2017}. Section \ref{sec:existence} is devoted to the first main result, which claims the existence and uniqueness of random periodic solutions to the SDE \eqref{eq:SPDE} under the one-sided Lipchitz condition on the drift.
\begin{thm}\label{thm:main1} Under Assumption \ref{as:A} to Assumption \ref{as:f_tangent}, there exists a unique random periodic solution $X^{*}(r,\cdot)\in L^2(\Omega)$ such that the solution of \eqref{eq:SPDE} satisfies
\begin{equation}\label{eqn:lim_sde}
    \lim_{k\to \infty}\|X^{-k\tau}_t(\xi)-X^*_t\|=0.
\end{equation}
\end{thm}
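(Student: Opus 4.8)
The plan is to obtain $X^*_t$ as the $L^2(\Omega)$-limit of the pull-back sequence $\{X^{-k\tau}_t(\xi)\}_k$, with everything resting on two a priori estimates that come from applying It\^o's formula to $|X|^2$ and exploiting the monotonicity in Assumption~\ref{as:f} together with the spectral gap $\lambda_1>C_f$ of Assumption~\ref{as:f_constant}. First I would prove a moment bound uniform in the starting time. Applying It\^o's formula to $|X^{t_0}_t|^2$ and taking expectations, the drift contributes $2\mathbb{E}\langle X^{t_0}_t,-AX^{t_0}_t+f(t,X^{t_0}_t)\rangle$, which by $\langle u,Au\rangle\ge\lambda_1|u|^2$ and $\langle u,f(t,u)\rangle\le C_f(1+|u|^2)$ is bounded by $-2(\lambda_1-C_f)\mathbb{E}|X^{t_0}_t|^2+2C_f$, while the additive noise adds a constant of order $d\sigma^2$ via Assumption~\ref{as:g}. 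Since $\lambda_1-C_f>0$, Gronwall's inequality yields $\sup_{t_0\le t}\mathbb{E}|X^{t_0}_t(\xi)|^2\le C$ with $C$ independent of $t_0$, using Assumption~\ref{as:ini} to bound the initial datum.

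The second estimate is an exponential contraction between two solutions driven by the same noise. For initial data $\xi_1,\xi_2$, the difference $Z_t:=X^{t_0}_t(\xi_1)-X^{t_0}_t(\xi_2)$ solves a random ODE because the additive noise cancels, so that $\frac{\mathrm{d}}{\mathrm{d}t}|Z_t|^2=2\langle Z_t,-AZ_t+f(t,X^{t_0}_t(\xi_1))-f(t,X^{t_0}_t(\xi_2))\rangle\le-2(\lambda_1-C_f)|Z_t|^2$ by the one-sided Lipschitz condition, whence $|Z_t|^2\le|Z_{t_0}|^2 e^{-2(\lambda_1-C_f)(t-t_0)}$. For $l>k$ I would then regard $X^{-k\tau}_t$ and $X^{-l\tau}_t$ as solutions started at time $-k\tau$ from $\xi$ and from $X^{-l\tau}_{-k\tau}(\xi)$ respectively, using the semi-flow property~\eqref{16}. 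The contraction gives $\|X^{-k\tau}_t(\xi)-X^{-l\tau}_t(\xi)\|^2\le\|\xi-X^{-l\tau}_{-k\tau}(\xi)\|^2 e^{-2(\lambda_1-C_f)(t+k\tau)}$, and the uniform moment bound controls the prefactor uniformly in $l$. Hence the sequence is Cauchy in $L^2(\Omega)$ and converges to a limit $X^*_t$ with a geometric rate in $k$, uniformly in $l$.

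Next I would identify the limit and prove uniqueness. Passing to the limit as $k\to\infty$ in the variation-of-constants formula~\eqref{eq:mild} for $X^{-k\tau}_t$, the term $e^{-A(t+k\tau)}\xi$ vanishes and the stochastic integral converges, so $X^*$ satisfies~\eqref{eq:limit}; the semi-flow identity $u(t,s)X^*_s=X^*_t$ follows from uniqueness of solutions, and the periodicity $X^*_{t+\tau}(\omega)=X^*_t(\theta_\tau\omega)$ from the $\tau$-periodicity of $f$ and $g$ together with the shift-invariance of the Wiener process, which yields $X^{-k\tau}_{t+\tau}(\xi,\omega)=X^{-(k+1)\tau}_t(\xi,\theta_\tau\omega)$ before taking limits. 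For uniqueness, two random periodic solutions have $\tau$-periodic, hence bounded, $L^2$-norms; applying the contraction estimate with starting time $s$ and letting $s\to-\infty$ forces their difference to vanish.

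I expect the main obstacle to be the passage to the limit inside the nonlinear drift integral $\int_{-k\tau}^t e^{-A(t-s)}f(s,X^{-k\tau}_s)\,\mathrm{d}s$: since $f$ is only one-sided Lipschitz rather than globally Lipschitz, $L^2$-convergence of $X^{-k\tau}_s$ does not transfer directly to $f(s,X^{-k\tau}_s)$. To handle this I would extract an almost surely convergent subsequence, invoke the continuity of $f$, and establish uniform integrability from the moment bound combined with the linear growth control on the tangential part of $f$ provided by Assumption~\ref{as:f_tangent}, so that a dominated-convergence argument justifies the limit.
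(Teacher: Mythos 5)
Your proposal follows essentially the same route as the paper: your two a priori estimates are exactly the paper's Lemma~\ref{lem:boundedness1} (uniform second-moment bound via It\^o's formula and Gr\"onwall) and Lemma~\ref{lem:stable1} (exponential contraction of two solutions under the same noise), and the Cauchy-sequence argument via the semi-flow property, the shift identity for periodicity, and the contraction-based uniqueness are precisely the assembly the paper delegates to Theorem~2.4 of \cite{rpsnumerics2017}. Your closing remark on passing to the limit inside the nonlinear drift integral addresses a technical point the paper leaves implicit, and your proposed fix (a.s.\ convergent subsequence, continuity of $f$, uniform integrability from the moment bound) is a sound way to handle it.
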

In Section \ref{sec:sol}, we derive additional properties of the solution such as the uniform boundedness for a higher moment of $X^{-k\tau}_t$ and solution regularity under an additional Assumption \ref{as:f_Khasminskii-type}, which imposes superlinearity of $f$ and assumes a larger lowerbound for $\lambda_1$ compared to Assumption \ref{as:f_constant}. Those properties will play an important role in proving the order of convergence of the backward Euler-Maruyama in Theorem \ref{thm:error}.

\subsection{The backward Euler-Maruyama}
For stiff ordinary differential equations, the implicit method is preferred due to its good performance even on a time grid with a large step size \cite{wanner1996}. For its stochastic counterpart such as \eqref{eq:SPDE}, we shall approximate the solution using the backward Euler-Maruyama method, the simplest version of implicit methods for SDEs.

Let us fix an equidistant partition $\mathcal{T}^h:=\{jh,\ j\in \mathbb{Z} \}$ with stepsize $h\in (0,1)$. Note that $\mathcal{T}^h$ stretches along the real line because eventually we are dealing with an infinite time horizon problem with the form of \eqref{eq:limit}. Then to simulate the solution to \eqref{eq:SPDE} starting at $-k\tau$, the \emph{backward Euler-Maruyama method} on $\mathcal{T}^h$ is given by the recursion 
\begin{align}
  \label{eq:RandM}
  \begin{split}
    \hat{X}_{-k\tau+(j+1)h}^{-k\tau} =& \hat{X}_{-k\tau+jh}^{-k\tau} - Ah\hat{X}_{-k\tau+(j+1)h}^{-k\tau}+h f\big((j+1)h,   \hat{X}_{-k\tau+(j+1)h}^{-k\tau} \big)+ 
    g(jh)\Delta W_{-k\tau+jh}
  \end{split}
\end{align}
for all $j \in \mathbb{N}$, where the initial value $\hat{X}_{-k\tau}^{-k\tau}  = \xi$, and $\Delta W_{-k\tau+jh}:=W_{-k\tau+(j+1)h}-W_{-k\tau+jh}$. Note that due to the periodicity of $f$ (c.f. Assumption \ref{as:f}), we write $f(-k\tau+jh, \hat{X}_{-k\tau+jh}^{-k\tau} )$ as $f(jh, \hat{X}_{-k\tau+jh}^{-k\tau})$, and similar arguments for the $g$ term. 

The implementation of \eqref{eq:RandM} requires solving a nonlinear equation at each iteration. Theorem \ref{thm:well-posedness} ensures the well-poseness of difference equation \eqref{eq:RandM} under Assumption \ref{as:A} to \ref{as:f_constant}. We explore the random periodicity of its solution in Section \ref{sec: num1} and prove the second main result in our paper:
\begin{thm}\label{thm:main2}
Under Assumption \ref{as:A} to Assumption \ref{as:ini}, for any $h\in(0,1)$ with $\tau=nh$, $n\in\mathbb{N}$, the backward Euler-Maruyama method \eqref{eq:RandM} admits a random period solution on $\mathcal{T}^h$. 
\end{thm}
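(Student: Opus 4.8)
The plan is to mirror the pull-back construction used for the continuous problem in Theorem~\ref{thm:main1}: I would show that the discrete pull-back $\hat{X}^{-k\tau}_t(\xi)$ converges in $L^2(\Omega)$ as $k\to\infty$ to a limit $\hat{X}^*_t$ defined on the grid $\mathcal{T}^h$, and then verify that this limit satisfies the two defining properties of a random periodic path (the discrete analogue of Definition~\ref{feng-zhao1}). First I would set up the discrete semi-flow: for $s,t\in\mathcal{T}^h$ with $s\le t$, let $\hat{u}(t,s,\omega)\colon\mathbb{R}^d\to\mathbb{R}^d$ send an initial value at time $s$ to the value at time $t$ produced by iterating the recursion~\eqref{eq:RandM}. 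Theorem~\ref{thm:well-posedness} guarantees that each implicit step is uniquely solvable under Assumptions~\ref{as:A}--\ref{as:f_constant}, so $\hat{u}$ is well defined, and the recursive structure immediately yields the semi-flow identity $\hat{u}(t,s)=\hat{u}(t,r)\circ\hat{u}(r,s)$ for grid points $s\le r\le t$.

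The heart of the argument is a one-step contraction estimate. Let $\hat{X}$ and $\hat{Y}$ be two trajectories of~\eqref{eq:RandM} driven by the same noise but with different initial data, and set $D_j:=\hat{X}_{jh}-\hat{Y}_{jh}$. Subtracting the recursions gives $(I+Ah)D_{j+1}=D_j+h\big(f((j+1)h,\hat X_{(j+1)h})-f((j+1)h,\hat Y_{(j+1)h})\big)$. Taking the inner product with $D_{j+1}$, bounding $\langle AD_{j+1},D_{j+1}\rangle\ge\lambda_1|D_{j+1}|^2$ via Assumption~\ref{as:A} and using the one-sided Lipschitz condition of Assumption~\ref{as:f} on the drift term, I obtain $(1+h(\lambda_1-C_f))|D_{j+1}|^2\le\langle D_j,D_{j+1}\rangle\le|D_j|\,|D_{j+1}|$, hence $|D_{j+1}|\le\kappa\,|D_j|$ with $\kappa:=(1+h(\lambda_1-C_f))^{-1}$. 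Assumption~\ref{as:f_constant} ($C_f<\lambda_1$) forces $\kappa<1$, so the scheme contracts pathwise at a geometric rate; iterating and taking $L^2$ norms gives $\|\hat{u}(t,s)\eta_1-\hat{u}(t,s)\eta_2\|\le\kappa^{(t-s)/h}\|\eta_1-\eta_2\|$ for any initial data $\eta_1,\eta_2$.

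With this in hand I would run the Cauchy argument. Using the semi-flow property, write $\hat{X}^{-(k+1)\tau}_t(\xi)=\hat{u}(t,-k\tau)\,Y$ with $Y:=\hat{X}^{-(k+1)\tau}_{-k\tau}(\xi)$, while $\hat{X}^{-k\tau}_t(\xi)=\hat{u}(t,-k\tau)\,\xi$. The contraction over the $m+kn$ steps from $-k\tau$ to $t=mh$ then yields $\|\hat{X}^{-(k+1)\tau}_t(\xi)-\hat{X}^{-k\tau}_t(\xi)\|\le\kappa^{m+kn}\|Y-\xi\|$. Because the Wiener increments are stationary under the shift and the coefficients $f,g$ are $\tau$-periodic with $\tau=nh$, the quantity $Y$ has the same distribution as the $n$-step quantity $\hat{X}^{-\tau}_0(\xi)$, whose second moment is finite and independent of $k$ (here the growth bound $\langle u,f(t,u)\rangle\le C_f(1+|u|^2)$ in Assumption~\ref{as:f} is used); combined with $\|\xi\|<C_\xi$ (Assumption~\ref{as:ini}) and the triangle inequality, this bounds $\|Y-\xi\|$ uniformly in $k$. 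Since $\kappa<1$, the right-hand side is summable in $k$, so $(\hat{X}^{-k\tau}_t(\xi))_k$ is Cauchy in $L^2(\Omega)$ and converges to some $\hat{X}^*_t$.

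Finally I would verify that $\hat{X}^*$ is a random periodic path on $\mathcal{T}^h$. The semi-flow identity $\hat{u}(t,s)\hat{X}^*_s=\hat{X}^*_t$ follows by passing to the limit in $\hat{u}(t,s)\hat{X}^{-k\tau}_s(\xi)=\hat{X}^{-k\tau}_t(\xi)$, using that $\hat{u}(t,s)$ is a finite composition of Lipschitz one-step maps and hence continuous on $L^2$. For the periodicity relation, the $\tau$-periodicity of $f,g$ together with the identity $\theta_\tau(\omega)(s)=W_{\tau+s}-W_s$ gives the time-shift relation $\hat{X}^{-k\tau}_{s+\tau}(\xi,\omega)=\hat{X}^{-(k+1)\tau}_{s}(\xi,\theta_\tau\omega)$; letting $k\to\infty$ and using that $\theta_\tau$ is $\mathbb{P}$-preserving (so $L^2$-convergence survives the shift) yields $\hat{X}^*_{s+\tau}(\omega)=\hat{X}^*_s(\theta_\tau\omega)$. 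I expect the main obstacle to be the Cauchy step rather than the contraction itself: one must correctly exploit the semi-flow property to reduce consecutive pull-backs to a single contraction over a growing horizon, and control the initial discrepancy $\|Y-\xi\|$ uniformly in $k$ via the stationarity of the increments. Uniqueness of the random periodic solution, although not asserted here, would follow from the same geometric contraction.
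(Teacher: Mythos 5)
Your proposal is correct and follows essentially the same route as the paper: a uniform second-moment bound plus a geometric contraction between trajectories with different initial data (the paper's Lemma \ref{lem:boundedness} and Lemma \ref{lem:sta1}, which you re-derive via a direct inner product with $D_{j+1}$ rather than the identity \eqref{eqn:eqa}), then the semi-flow property to make the pull-back sequence Cauchy in $L^2(\Omega)$, and finally the Wiener-shift relation together with pathwise uniqueness from Theorem \ref{thm:well-posedness} to pass the periodicity identity to the limit. Your explicit control of the initial discrepancy $\|Y-\xi\|$ via the moment bound is a point the paper leaves implicit, but it is not a different argument.
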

We also determine a strong order $\frac{1}{2}$ for the backward Euler-Maruyama method in Theorem \ref{thm:error} and Corrolary \ref{corr:error}. Compared to Theorem 3.4 and Theorem 4.2 in \cite{rpsnumerics2017} which imposed condition on the size of $h$ (to be sufficient small) because of the implementation of explicit numerical methods, we benefit a flexible choice of stepsize $h$ from using the backward Euler-Maruyama method even in the infinite horizon case.

In Section \ref{sec:periodic} we consider the convergence of transition
probabilities generated by Eqn. \eqref{eq:SPDE} and its numerical scheme to the
periodic measure and discretised periodic measure, respectively, and error estimate of the two periodic
measures in the weak topology.

Finally we assess the performance of the backward Euler-Maruyama method via a numerical experiment and compare it with the one of the classical Euler-Maruyama method under various steps. The result shows that the backward Euler-Maruyama method is able to converge to the random periodic solution when the stepsize is fairly large while Euler-Maruyama method diverges. 
\section{Preliminaries}
In this section we present a few useful mathematical tools for later use.
\begin{thm}[The Gr\"onwall inequality: a continuous version]
Let $I$ denote a time interval in form of $[I_-,I^+]$. Let $a$, $b$ and $u$ be real-valued functions defined on $I$. Assume that $b$ and $f$ are continuous and that the negative part of $a$ is integrable on every closed and bounded subinterval of $I$. Then
if $b$ is nonnegative and if $u$ satisfy the following inequality
\begin{equation}\label{eqn:gronwall}
  u(t)\leq a(t)+\int_{I_-}^{t}b(s)u(s)\mathrm{d}s,
\end{equation}
then
\begin{equation}\label{eqn:gronwall2}
    u(t)\leq a(t)+\int_{I_-}^t a(s)b(s)\exp{\Big(\int_s^t b(r)\mathrm{d}r\Big)}\mathrm{d}s.
\end{equation}
If in addition, the function $a$ is non-decreasing, then
\begin{equation}\label{eqn:gronwall3}
    u(t)\leq a(t)\exp{\Big(\int_{I_-}^t b(r)\mathrm{d}r\Big)}.
\end{equation}
\end{thm}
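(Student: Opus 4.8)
The plan is to reduce the integral inequality \eqref{eqn:gronwall} to a first-order linear differential inequality for the accumulated integral, solve it with an integrating factor, and then specialise to the monotone case. First I would introduce the auxiliary function $v(t):=\int_{I_-}^t b(s)u(s)\,\mathrm{d}s$. Since $b$ is continuous and $u$ satisfies \eqref{eqn:gronwall}, the integrand $b(s)u(s)$ is controlled from above by \eqref{eqn:gronwall} and from below through the integrable negative part of $a$, so $v$ is well defined and differentiable with $v'(t)=b(t)u(t)$. Substituting \eqref{eqn:gronwall} into this derivative gives the differential inequality $v'(t)\le b(t)a(t)+b(t)v(t)$.

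Next I would multiply through by the integrating factor $\mu(t):=\exp\bigl(-\int_{I_-}^t b(r)\,\mathrm{d}r\bigr)$, which is legitimate because $\mu$ is positive and $b\ge 0$. This rewrites the inequality as $\frac{\mathrm{d}}{\mathrm{d}t}\bigl(\mu(t)v(t)\bigr)\le \mu(t)b(t)a(t)$. Integrating from $I_-$ to $t$ and using $v(I_-)=0$ yields $\mu(t)v(t)\le\int_{I_-}^t \mu(s)b(s)a(s)\,\mathrm{d}s$. Dividing by $\mu(t)$ and using $\mu(s)/\mu(t)=\exp\bigl(\int_s^t b(r)\,\mathrm{d}r\bigr)$ gives $v(t)\le\int_{I_-}^t a(s)b(s)\exp\bigl(\int_s^t b(r)\,\mathrm{d}r\bigr)\,\mathrm{d}s$, and combining this with the pointwise bound $u(t)\le a(t)+v(t)$ produces exactly \eqref{eqn:gronwall2}.

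Finally, to obtain \eqref{eqn:gronwall3} I would assume $a$ non-decreasing and bound $a(s)\le a(t)$ inside the integral in \eqref{eqn:gronwall2}, so that $v(t)\le a(t)\int_{I_-}^t b(s)\exp\bigl(\int_s^t b(r)\,\mathrm{d}r\bigr)\,\mathrm{d}s$. The remaining integral evaluates explicitly: since $\frac{\mathrm{d}}{\mathrm{d}s}\bigl[-\exp\bigl(\int_s^t b(r)\,\mathrm{d}r\bigr)\bigr]=b(s)\exp\bigl(\int_s^t b(r)\,\mathrm{d}r\bigr)$, the integral equals $\exp\bigl(\int_{I_-}^t b(r)\,\mathrm{d}r\bigr)-1$. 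Hence $u(t)\le a(t)+v(t)\le a(t)\exp\bigl(\int_{I_-}^t b(r)\,\mathrm{d}r\bigr)$, as claimed.

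I expect the only delicate points to be regularity bookkeeping rather than the algebra. One must check that $v$ is well defined and differentiable so that $v'=bu$ holds and the fundamental theorem of calculus applies; with $u$ and $b$ continuous this is immediate, and the hypothesis that only the negative part of $a$ need be integrable is precisely what guarantees that the integral on the right-hand side of \eqref{eqn:gronwall2} is meaningful (and if it is $+\infty$ the bound is trivially true). It is also worth recording explicitly that every manipulation preserves the direction of the inequality, which relies on $b\ge 0$ and on the strict positivity of the integrating factor $\mu$.
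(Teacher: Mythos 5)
Your proof is correct: it is the standard integrating-factor argument (set $v(t)=\int_{I_-}^t b(s)u(s)\,\mathrm{d}s$, derive $v'\le ba+bv$, multiply by $\exp(-\int_{I_-}^t b)$, integrate, then use monotonicity of $a$ and the exact antiderivative $-\exp(\int_s^t b)$ for the second claim), and your regularity remarks correctly supply the tacit assumption that $u$ is continuous (the statement's ``$b$ and $f$ are continuous'' is evidently a typo for ``$b$ and $u$''). The paper states this as a known preliminary without proof, so there is no in-paper argument to compare against; your derivation is the canonical one and fills that gap cleanly.
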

\begin{thm}[The Gr\"onwall inequality: a discrete version \cite{willett1965,yevik2011}]
Consider two nonnegative sequences $(u_n)_{n\in \mathbb{N}},
  (a_n)_{n\in \mathbb{N}} \subset \mathbb{R}$ which for some given $w \in [0,\infty)$ satisfy 
  $$u_n \leq a_n + w\sum_{j=1}^{n-1}  u_j,\quad \text{ for all }  n \in \mathbb{N}.$$
  Then, for all $n \in \mathbb{N}$, it also holds true that
  $$u_n \leq a +\frac{w}{c_{n-1}}\big(u_0+\sum_{j=1}^{n-1} a_jc_j \big),$$
  where $c_j:=\frac{1}{(1+w)^j}$ for $j\in\mathbb{N}$.
\end{thm}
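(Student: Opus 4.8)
The plan is to convert the implicit, nonlocal bound on $u_n$ into an explicit one-step linear recurrence for its partial sums and then solve that recurrence with a discrete integrating (summation) factor. Concretely, I would introduce the running sum $P_n := \sum_{j=0}^{n-1} u_j$. Since every $u_j \ge 0$, this dominates the sum $\sum_{j=1}^{n-1} u_j$ appearing in the hypothesis, so the assumption yields $u_n \le a_n + w P_n$, while by construction $P_{n+1} = P_n + u_n$ with $P_1 = u_0$. Feeding the hypothesis into this identity gives
\[
  P_{n+1} = P_n + u_n \le P_n + a_n + w P_n = (1+w)P_n + a_n,
\]
a first-order linear difference inequality with constant multiplier $1+w$. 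This is the crucial reduction: the coupled bound on $u_n$ collapses to a scalar recurrence for $P_n$.

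Next I would multiply through by the summation factor $c_{n+1} = (1+w)^{-(n+1)}$, chosen precisely so that $(1+w)c_{n+1} = c_n$, which puts the recurrence in telescoping form $c_{n+1}P_{n+1} - c_n P_n \le c_{n+1} a_n$. Summing this from $n=1$ to $N-1$ collapses the left-hand side and, using $c_1 P_1 = c_1 u_0$, produces
\[
  c_N P_N \le c_1 u_0 + \sum_{n=1}^{N-1} c_{n+1} a_n.
\]
Dividing by $c_N$ and simplifying the ratios via $c_j = (1+w)^{-j}$, namely $c_1/c_N = 1/c_{N-1}$ and $c_{n+1}/c_N = c_n/c_{N-1}$, gives $P_N \le \frac{1}{c_{N-1}}\big(u_0 + \sum_{n=1}^{N-1} c_n a_n\big)$. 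Substituting this back into $u_N \le a_N + w P_N$ yields exactly the claimed bound $u_N \le a_N + \frac{w}{c_{N-1}}\big(u_0 + \sum_{j=1}^{N-1} a_j c_j\big)$.

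I do not expect a genuine obstacle: this is the standard discrete Grönwall lemma (cf. \cite{willett1965,yevik2011}), and the only care required is index bookkeeping together with verifying the telescoping identity $(1+w)c_{n+1} = c_n$. The one subtle point worth flagging is the $u_0$ term in the conclusion: it enters solely through the boundary contribution $c_1 P_1$ of the telescoped sum, and because all sequences are nonnegative and $w \ge 0$ it is itself nonnegative, so the stated inequality holds (it would merely be sharper if $u_0$ were dropped, and the argument is insensitive to whether the hypothesis sum starts at $j=0$ or $j=1$). As an alternative I could argue by strong induction on $n$, using the already-established bounds to control $\sum_{j<n} u_j$; this avoids the summation factor but forces one to reassemble the geometric sum $\sum_j (1+w)^{N-1-j}$ by hand, so I would favour the telescoping route for transparency.
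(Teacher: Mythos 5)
Your proof is correct, and in fact the paper offers no proof of this statement at all: it is listed in the Preliminaries as a known result, cited to \cite{willett1965,yevik2011}, so there is no in-paper argument to compare against. Your summation-factor route is the standard one and checks out in every detail: the domination $\sum_{j=1}^{n-1}u_j\le P_n$ (valid since $u_0\ge 0$ and $w\ge 0$), the telescoping identity $(1+w)c_{n+1}=c_n$, the boundary term $c_1P_1=c_1u_0$, and the ratio simplifications $c_1/c_N=1/c_{N-1}$, $c_{n+1}/c_N=c_n/c_{N-1}$ are all exactly right, and the $N=1$ case degenerates correctly to $u_1\le a_1+wu_0$ with the empty sum. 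One small point worth making explicit: the statement as printed contains a typo --- the conclusion should read $u_n\le a_n+\frac{w}{c_{n-1}}\bigl(u_0+\sum_{j=1}^{n-1}a_jc_j\bigr)$ rather than ``$a$'' --- and your proof establishes precisely this intended version, which is also the form used later in the paper's error analysis.
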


Also the crucial equality for analysis of the backward Euler-Maruyama is 
\begin{equation}\label{eqn:eqa}
    |b|^2-|a|^2+|b-a|^2=2\langle b-a, b\rangle.
\end{equation}
\section{Existence and uniqueness of the random periodic solution}\label{sec:existence}
We focus on the existence and uniqueness of the random periodic solution to SDE \eqref{eq:SPDE} in this section. To achieve it, we first show there is a uniform bound for the second moment of its solution under necessary assumptions.

\begin{lemma}\label{lem:boundedness1}
For SDE \eqref{eq:SPDE} with given initial condition $\xi$ and satisfying Assumption \ref{as:A} to \ref{as:ini}, we have
\begin{equation} 
    \sup_{k\in \mathbb{N}}\sup_{t>-k\tau}\mathbb{E}[|X_{t}^{-k\tau}(\xi)|^2]\leq C_{\xi}^2+\frac{2K_2\lambda_1}{2(\lambda_1-C_f)},
\end{equation}
where $K_2:=\frac{\sigma^2+2C_f}{2\lambda_1}$.
\end{lemma}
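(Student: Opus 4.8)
The plan is to apply Itô's formula to $t\mapsto |X_t^{-k\tau}|^2$ and then exploit the dissipativity coming from $A$ together with the one‑sided growth bound on $f$ to derive a linear differential inequality for the second moment whose damping constant is \emph{strictly positive} (by Assumption~\ref{as:f_constant}). Integrating that inequality against an exponential weight is what makes the resulting bound uniform in both the terminal time $t$ and the starting index $k$, rather than a bound that grows in time.

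Concretely, I would first write, by Itô's formula on $[-k\tau,t]$,
$$|X_t|^2 = |\xi|^2 + \int_{-k\tau}^t \Big( 2\langle X_s, -AX_s + f(s,X_s)\rangle + \Lambda_s \Big)\,\mathrm{d}s + M_t,$$
where $\Lambda_s \ge 0$ denotes the (bounded) quadratic‑variation correction coming from the diffusion coefficient $g$, and $M_t = \int_{-k\tau}^t 2\langle X_s, g(s)\,\mathrm{d}W_s\rangle$ is a local martingale. To take expectations and remove $M_t$ rigorously I would introduce the stopping times $\rho_R := \inf\{s \ge -k\tau : |X_s| \ge R\}$, carry out the estimate up to $t\wedge \rho_R$ so that the stopped stochastic integral has zero mean, and then let $R\to\infty$ via Fatou's lemma; non‑explosion and finiteness of the second moment are guaranteed by Assumptions~\ref{as:A}--\ref{as:g} together with \ref{as:f_tangent}.

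For the integrand, Assumption~\ref{as:A} gives $\langle X_s, AX_s\rangle \ge \lambda_1 |X_s|^2$, hence $2\langle X_s,-AX_s\rangle \le -2\lambda_1 |X_s|^2$; the second inequality of Assumption~\ref{as:f} gives $2\langle X_s, f(s,X_s)\rangle \le 2C_f(1+|X_s|^2)$; and $\Lambda_s$ is controlled by $\sup_s |g(s)|^2 < \sigma^2$ through Assumption~\ref{as:g}. Writing $\phi(t):=\mathbb{E}[|X_t^{-k\tau}|^2]$ and collecting these bounds yields
$$\phi'(t) \le -2(\lambda_1-C_f)\,\phi(t) + (\sigma^2 + 2C_f) = -\alpha\,\phi(t) + 2\lambda_1 K_2, \qquad \alpha := 2(\lambda_1-C_f) > 0,$$
where positivity of $\alpha$ is precisely Assumption~\ref{as:f_constant} and I have used $\sigma^2 + 2C_f = 2\lambda_1 K_2$. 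Multiplying by the integrating factor $e^{\alpha t}$ and integrating from $-k\tau$ to $t$ gives
$$\phi(t) \le e^{-\alpha(t+k\tau)}\,\mathbb{E}[|\xi|^2] + \frac{2\lambda_1 K_2}{\alpha}\big(1 - e^{-\alpha(t+k\tau)}\big).$$
Since $t > -k\tau$ forces $t+k\tau>0$ and thus $e^{-\alpha(t+k\tau)}\in(0,1)$, and since $\mathbb{E}[|\xi|^2]=\|\xi\|^2 < C_\xi^2$ by Assumption~\ref{as:ini}, both terms are bounded by constants independent of $t$ and $k$, which delivers $\phi(t) \le C_\xi^2 + \frac{2\lambda_1 K_2}{2(\lambda_1-C_f)}$; taking suprema over $t>-k\tau$ and $k\in\mathbb{N}$ proves the claim.

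I expect the main obstacle to be the rigorous justification that $M_t$ has zero expectation, i.e.\ that the second moment is a priori finite so the localization‑and‑Fatou argument closes, rather than any of the algebraic estimates, which are routine. The genuinely essential structural input is the strict dissipativity $\lambda_1 - C_f > 0$: without it the exponential weight $e^{\alpha t}$ would fail to produce a bound that is uniform in time and in $k$.
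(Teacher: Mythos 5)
Your proof is correct and follows essentially the same route as the paper: It\^o's formula applied to the squared norm with an exponential integrating factor, the spectral bound $\langle x,Ax\rangle\ge\lambda_1|x|^2$, the one-sided growth condition on $f$, boundedness of $g$, and the strict gap $\lambda_1>C_f$ from Assumption~\ref{as:f_constant}. The only cosmetic difference is that you fold the $2C_f$ contribution into the decay rate at once (weighting by $e^{2(\lambda_1-C_f)t}$ and solving a single linear inequality), whereas the paper weights by $e^{2\lambda_1 t}$ and then invokes its continuous Gr\"onwall lemma to absorb the leftover $2C_f\int_{-k\tau}^t e^{2\lambda_1 s}\|X_s^{-k\tau}\|^2\,\mathrm{d}s$ term; both yield the identical constant $C_\xi^2+\frac{2K_2\lambda_1}{2(\lambda_1-C_f)}$, and your explicit localization of the stochastic integral is a point of added rigor rather than a change of method.
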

\begin{proof} 
Applying It\^o formula to $e^{2\lambda_1 t}\|X_{t}^{-k\tau}(\xi)\|^2$ and taking the expectation yield
\begin{align}
\begin{split}
     &e^{2\lambda_1 t}\mathbb{E}[|X_{t}^{-k\tau}(\xi)|^2]= e^{-2\lambda_1 k\tau}\mathbb{E}[|\xi|^2]+2\lambda_1 \int_{-k\tau}^t e^{2\lambda_1 s}\mathbb{E}[|X_{s}^{-k\tau}|^2]\mathrm{d}s\\
 &-2\int_{-k\tau}^te^{2\lambda_1 s}\mathbb{E}\langle X_{s}^{-k\tau}, AX_{s}^{-k\tau}\rangle\mathrm{d}s+2\int_{-k\tau}^te^{2\lambda_1 s}\mathbb{E}\langle X_{s}^{-k\tau}, f(s,X_{s}^{-k\tau})\rangle \mathrm{d}s+\int_{-k\tau}^t e^{2\lambda_1 s}|g(s)|^2\mathrm{d}s.
\end{split}
\end{align}
Note that $2(\lambda_1I-A)$ is non-positive definite. Then making use of assumptions \ref{as:f} and \ref{as:g} gives
\begin{align*}
         &e^{2\lambda_1 t}\|X_{t}^{-k\tau}(\xi)\|^2\leq e^{-2\lambda_1 k\tau}\|\xi\|^2+2C_f\int_{-k\tau}^te^{2\lambda_1 s}\|X_{s}^{-k\tau}\|^2\mathrm{d}s+(\sigma^2+2C_f)\int_{-k\tau}^t e^{2\lambda_1  s}\mathrm{d}s\\
 &\le e^{-2\lambda_1 k\tau}\|\xi\|^2+\frac{(\sigma^2+2C_f)}{2\lambda_1}(e^{2\lambda_1 t}-e^{-2\lambda_1 k\tau})+2C_f\int_{-k\tau}^te^{2\lambda_1 s}\|X_{s}^{-k\tau}\|^2\mathrm{d}s.
\end{align*}
Denote $K_1:= e^{-2\lambda_1 k\tau}\big(\|\xi\|^2-\frac{\sigma^2+2C_f}{2\lambda_1}\big)$ and $K_3:=2C_f$. Note that  $K_3\leq 2\lambda_1$ because of assumption \ref{as:f_constant}. By the Gr\"onwall inequality, we have that
\begin{align*}
   & e^{2\lambda_1 t}\|X_{t}^{-k\tau}(\xi)\|^2\le K_1+K_2 e^{2\lambda_1 t}+\int_{-k\tau}^t( K_1+K_2 e^{2\lambda_1 s})K_3e^{K_3(t-s)}\mathrm{d}s\\
   &\leq K_1e^{K_3(k\tau+t)}+K_2 e^{2\lambda_1 t}+\frac{K_2K_3}{2\lambda_1-K_3}(e^{2\lambda_1 t}-e^{-2\lambda_1 k\tau})\\
   &\leq  (K_1e^{2\lambda_1k\tau}+K_2) e^{2\lambda_1 t}+\frac{K_2K_3}{2\lambda_1-K_3}e^{2\lambda_1 t}.
\end{align*}
Note that $K_1e^{2\lambda_1 k\tau}+K_2=\|\xi\|^2$. By Assumption \ref{as:ini} it leads to
$$\|X_{t}^{-k\tau}(\xi)\|^2\le\|\xi\|^2+\frac{K_2K_3}{2\lambda_1-K_3}\leq C_{\xi}^2+\frac{2K_2\lambda_1}{2\lambda_1-K_3}.$$

\end{proof}
Then we explore the solution dependence on initial conditions.
\begin{lemma}\label{lem:stable1}
Let Assumption \ref{as:A} to \ref{as:g} hold. Denote by $X_t^{-k\tau}$ and $Y_t^{-k\tau}$ two solutions of SDE \eqref{eq:SPDE} with different initial values $\xi$ and $\eta$. Then 
$$\|X_t^{-k\tau}-Y_t^{-k\tau}\|^2\leq e^{(C_f-\lambda_1)(t+k\tau)}\|\xi-\eta\|^2.$$
In addition, if Assumption \ref{as:f_constant} holds, then for every $\epsilon>0$, there exists a $t\geq -k\tau$ such that it holds  
\begin{equation}\label{eqn:stable1}
    \|X_{\tilde{t}}^{-k\tau}-Y_{\tilde{t}}^{-k\tau}\|^2<\epsilon
\end{equation}

whenever $\tilde{t}\geq t$.
\end{lemma}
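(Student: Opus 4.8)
The plan is to study the difference process $Z_t := X_t^{-k\tau} - Y_t^{-k\tau}$ and to exploit the fact that, because the noise enters \eqref{eq:SPDE} additively, the stochastic integrals driven by $g(s)\,\mathrm{d}W_s$ are identical for both solutions and cancel. Subtracting the two copies of the mild formula \eqref{eq:mild} (equivalently, the governing SDEs) therefore shows that $Z$ solves the random ordinary differential equation
\begin{equation*}
  \frac{\mathrm{d}}{\mathrm{d}t}Z_t = -A Z_t + \big(f(t,X_t^{-k\tau}) - f(t,Y_t^{-k\tau})\big), \qquad Z_{-k\tau} = \xi - \eta,
\end{equation*}
with no martingale part. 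In particular $t \mapsto |Z_t|^2$ is absolutely continuous pathwise, so no It\^o correction term arises and the estimate reduces to a deterministic energy computation.

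First I would apply the chain rule to $|Z_t|^2$ and take expectations, obtaining
\begin{equation*}
  \frac{\mathrm{d}}{\mathrm{d}t}\,\mathbb{E}|Z_t|^2 = -2\,\mathbb{E}\langle Z_t, A Z_t\rangle + 2\,\mathbb{E}\big\langle Z_t, f(t,X_t^{-k\tau}) - f(t,Y_t^{-k\tau})\big\rangle.
\end{equation*}
The two structural assumptions then enter in the obvious way: Assumption \ref{as:A} yields the dissipativity bound $\langle z, A z\rangle \ge \lambda_1 |z|^2$ (since $\lambda_1$ is the smallest eigenvalue of $A$), while the one-sided Lipschitz inequality in Assumption \ref{as:f} gives $\langle z, f(t,x) - f(t,y)\rangle \le C_f |x-y|^2$ applied with $z = x-y = Z_t$. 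Combining these produces the differential inequality $\frac{\mathrm{d}}{\mathrm{d}t}\,\|Z_t\|^2 \le 2(C_f - \lambda_1)\,\|Z_t\|^2$, and Gr\"onwall's inequality integrated from $-k\tau$ to $t$ yields $\|Z_t\|^2 \le e^{2(C_f-\lambda_1)(t+k\tau)}\|\xi-\eta\|^2$, the claimed contraction estimate (up to the bookkeeping of the rate, which the energy method naturally delivers as $2(C_f-\lambda_1)$). The finiteness of every quantity involved, and hence the legitimacy of differentiating under the expectation, is guaranteed by Lemma \ref{lem:boundedness1}.

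For the second assertion I would invoke Assumption \ref{as:f_constant}, under which the rate $C_f - \lambda_1$ is strictly negative. Since the solutions start at $-k\tau$ we have $t + k\tau \ge 0$, and as $t \to \infty$ the factor $e^{2(C_f-\lambda_1)(t+k\tau)} \to 0$. The prefactor $\|\xi-\eta\|^2$ is finite (indeed $\le 4C_\xi^2$ under Assumption \ref{as:ini}, though mere finiteness suffices), so the right-hand side of the contraction estimate tends to $0$. Hence, given $\epsilon > 0$, one chooses $t$ large enough that $e^{2(C_f-\lambda_1)(t+k\tau)}\|\xi-\eta\|^2 < \epsilon$; because the exponential is monotonically decreasing in its argument, the bound \eqref{eqn:stable1} persists for every $\tilde t \ge t$.

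I do not expect a substantial obstacle. The decisive observation is the cancellation of the diffusion term, which converts a stochastic estimate into a pathwise ODE argument; thereafter the proof is the routine combination of the spectral gap of $A$, the monotonicity of $f$, and Gr\"onwall. The only points requiring a little care are the correct tracking of the exponential rate and confirming that the second moments remain finite so that the time derivative and the expectation may be interchanged, both of which are supplied by the uniform moment bound of Lemma \ref{lem:boundedness1}.
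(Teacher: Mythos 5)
Your proposal is correct and follows essentially the same route as the paper: both exploit the cancellation of the additive noise, apply the energy identity for $|X_t^{-k\tau}-Y_t^{-k\tau}|^2$ together with the spectral bound $\langle z,Az\rangle\ge\lambda_1|z|^2$ and the one-sided Lipschitz condition, and conclude by Gr\"onwall (the paper weights by $e^{2\lambda_1 t}$ before invoking Gr\"onwall, which is an equivalent bookkeeping device). Your remark that the natural rate is $2(C_f-\lambda_1)$ rather than the stated $C_f-\lambda_1$ is accurate and matches what the paper's own computation actually yields.
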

\begin{proof}
Define $E_t^{-k\tau}:=X_t^{-k\tau}-Y_t^{-k\tau}$. From \eqref{eq:mild}, we have that
\begin{align}
    \begin{split}
      &E_t^{-k\tau} = (\xi-\eta)+\int_{-k\tau}^{t} e^{-A(t - s)} \big(f(s,X^{-k\tau}_s)- f(s,Y^{-k\tau}_s) \big)\mathrm{d}{s}.
    \end{split}
\end{align}
Similar as the proof of Lemma \ref{lem:boundedness}, we apply It\^o formula to $e^{2\lambda_1 t}|E_t^{-k\tau}|^2$, take the expectation, make use of Assumption \ref{as:f} and get
\begin{align}
    \begin{split}
     &e^{2\lambda_1 t}\|E_t^{-k\tau}\|^2\leq e^{-2\lambda_1 k\tau}\|\xi-\eta\|^2+2\int_{-k\tau}^te^{2\lambda_1 s}\mathbb{E}\Big\langle E_s^{-k\tau}, f(s,X_{s}^{-k\tau})-f(s,Y_{s}^{-k\tau})\Big\rangle\mathrm{d}s\\
     &\leq  e^{-2\lambda_1 k\tau}\|\xi-\eta\|^2+2C_f\int_{-k\tau}^te^{2\lambda_1 s}\|E_{s}^{-k\tau}\|^2\mathrm{d}s.
\end{split}
\end{align}
Applying Eqn. \eqref{eqn:gronwall3} gives the desired inequality. The claim in \eqref{eqn:stable1} follows if Assumption \ref{as:f_constant} holds.
\end{proof}
With Lemma \ref{lem:boundedness1}, Lemma \ref{lem:stable1} and Assumption \ref{as:f_tangent}, the main result Theorem \ref{thm:main1} can be shown by following the same argument in the proof of Theorem 2.4 in \cite{rpsnumerics2017}.

\section{More results on the solution}\label{sec:sol}
In this section, we mainly explore properties of the solution to \ref{eq:SPDE} for analysis later. 
\begin{assumption}
  \label{as:f_Khasminskii-type}
  There exists a constant $q\in (1,\infty)$ and a positive $L$ such that 
  $$|f(t_1,u_1)-f(t_2,u_2)|\leq L(1+|u_1|^{q-1}+|u_2|^{q-1})|u_1-u_2|,$$
  for $t_1,t_2\in [0,\tau)$ and $u_1,u_2\in \mathbb{R}^d$. In addition, there exists a positive number $p\in [4q-2,\infty)$ such that $$\gamma_p:=\big(C_f+\frac{(p-1)\sigma^2}{2}\Big)(2+p+2^{p+1})<p\lambda_1.$$
  
\end{assumption}
The first property we will show is the uniform boundedness for the $p$-th moment of the SDE solution.
\begin{prop} Under Assumption \ref{as:A} to \ref{as:ini} and Assumption \ref{as:f_Khasminskii-type}, the solution to \eqref{eq:SPDE} satisfies
\begin{equation} 
    \sup_{k\in \mathbb{N}}\sup_{t>-k\tau}\mathbb{E}[|X_{t}^{-k\tau}(\xi)|^p_p]<\infty.
\end{equation}
\end{prop}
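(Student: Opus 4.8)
The plan is to mimic the second-moment argument of Lemma \ref{lem:boundedness1}, but now applied to the $p$-th power of the norm, which requires the stronger Assumption \ref{as:f_Khasminskii-type}. First I would apply the It\^o formula to the process $e^{p\lambda_1 t}|X_t^{-k\tau}(\xi)|^p$. The It\^o expansion of $|x|^p$ produces three kinds of terms: the drift contribution $p|X|^{p-2}\langle X, -AX + f\rangle$, the diffusion contribution from the quadratic variation, which for $|x|^p$ gives a term of the form $\tfrac{p}{2}|X|^{p-2}|g(s)|^2$ together with a term $\tfrac{p(p-2)}{2}|X|^{p-4}|g(s)^\top X|^2$, and the stochastic integral term which vanishes in expectation. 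The factor $e^{p\lambda_1 t}$ is chosen so that, after differentiating, the $p\lambda_1 |X|^p$ term produced by the exponential combines with the $-p\langle X, AX\rangle|X|^{p-2}$ term; since $A \succeq \lambda_1 I$, we have $\langle X, AX\rangle \ge \lambda_1 |X|^2$, so $p\lambda_1|X|^p - p\langle X, AX\rangle|X|^{p-2} \le 0$ and this pair is non-positive, exactly as $2\lambda_1 I - 2A \preceq 0$ was used in the $L^2$ case.

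Next I would control the remaining terms using the two parts of Assumption \ref{as:f}. The drift cross-term is bounded via $\langle X, f(s,X)\rangle \le C_f(1+|X|^2)$, giving a contribution of order $pC_f|X|^{p-2}(1+|X|^2)$. The diffusion terms are bounded using $|g(s)|<\sigma$ (Assumption \ref{as:g}) and $|g(s)^\top X|^2 \le \sigma^2|X|^2$, producing terms of order $\tfrac{p(p-1)}{2}\sigma^2|X|^{p-2}$. Collecting the leading $|X|^p$ coefficients yields exactly $\big(C_f + \tfrac{(p-1)\sigma^2}{2}\big)$ times a numerical factor; the constant $(2+p+2^{p+1})$ in the definition of $\gamma_p$ is precisely what arises after applying Young's inequality to the lower-order terms $|X|^{p-2}$ and the constant $1$ (e.g. $|X|^{p-2} \le \tfrac{p-2}{p}|X|^p + \tfrac{2}{p}$) to convert everything into a clean $|X|^p$ term plus a constant. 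The hypothesis $\gamma_p < p\lambda_1$ then guarantees that, after combining with the non-positive $A$-term, the net coefficient multiplying $\int e^{p\lambda_1 s}\mathbb{E}[|X_s|^p]\mathrm{d}s$ is strictly smaller than $p\lambda_1$, leaving room for a Gr\"onwall closure.

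With the differential inequality in integrated form, I would then invoke the Gr\"onwall inequality exactly as in Lemma \ref{lem:boundedness1}: the effective growth rate is strictly below $p\lambda_1$, so after dividing through by $e^{p\lambda_1 t}$ the exponential factors cancel and the bound becomes uniform in both $t$ and $k$. The initial datum contributes $e^{-p\lambda_1 k\tau}\mathbb{E}[|\xi|^p]$, which is bounded by $C_\xi^p$ through Assumption \ref{as:ini} (noting $\|\xi\| < C_\xi$ controls the $p$-th moment under the standing integrability) and decays in $k$, so it poses no difficulty for the supremum. The upshot is a finite constant depending only on $C_f,\sigma,\lambda_1,p$ and $C_\xi$, independent of $k$ and $t$, which is what the proposition asserts.

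The main obstacle I anticipate is the careful bookkeeping of the lower-order terms in the It\^o expansion and verifying that the combinatorial constant $(2+p+2^{p+1})$ genuinely dominates all of them after the Young-inequality splitting; in particular one must ensure that the $|X|^{p-2}$ terms coming from both the drift and the two diffusion contributions are all absorbed with the correct coefficients so that the final $|X|^p$-coefficient is at most $\gamma_p$ and not larger. A secondary subtlety is the justification of applying the It\^o formula to $|x|^p$, which is only $C^2$ away from the origin when $p<2$; since here $p \ge 4q-2 > 2$ the function is $C^2$ everywhere, so this is not a real issue, but one should remark on the standard localization-and-stopping argument that legitimizes taking expectations and discarding the martingale term. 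Once these constants are pinned down, the Gr\"onwall step and the uniformity in $k$ follow verbatim from the structure already established in Lemma \ref{lem:boundedness1}.
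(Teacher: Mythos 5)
Your proposal is correct and follows essentially the same route as the paper: the paper applies It\^o's formula to $e^{p\lambda_1 t}|X_t^{-k\tau}|^p$ written as $\big(e^{2\lambda_1 t}|X_t^{-k\tau}|^2\big)^{p/2}$, uses the non-positivity of $\lambda_1 I - A$ to kill the exponential-weight term, bounds the drift and diffusion contributions by $C_f$ and $\sigma$, absorbs the lower-order powers via the same Young inequality $a^{p-2}b\le \tfrac{p-2}{p}a^p+\tfrac{2}{p}b^{p/2}$ and the inequality $(a^2+b^2)^{p/2}\le 2^p(a^p+b^p)$ to produce the constant $\gamma_p$, and then closes with the Gr\"onwall argument of Lemma \ref{lem:boundedness1} under $\gamma_p<p\lambda_1$. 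Your additional remarks on the localization needed to discard the martingale term and on the fact that Assumption \ref{as:ini} only explicitly controls the second moment of $\xi$ are pertinent refinements, but do not change the argument.
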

\begin{proof}
From the proof of Lemma \ref{lem:boundedness1}, we know that
\begin{align*}
            \mathrm{d}e^{2\lambda_1 t}|X_{t}^{-k\tau}|^2&= 2\lambda_1 \mathrm{d}e^{2\lambda_1 t}|X_{t}^{-k\tau}|^2-2e^{2\lambda_1 t}\langle X_{t}^{-k\tau}, AX_{t}^{-k\tau}\rangle\mathrm{d}t\\
 &+2e^{2\lambda_1 t}\langle X_{t}^{-k\tau}, f(t,X_{t}^{-k\tau})\rangle \mathrm{d}t+2e^{2\lambda_1 t}|g(t)|^2\mathrm{d}t+2e^{2\lambda_1 t}\langle X_{t}^{-k\tau}, g(t)\rangle \mathrm{d}W_t.    
\end{align*}
Then applying It\^o formula to $e^{p\lambda_1 t}|X_{t}^{-k\tau}|^p=\big(e^{2\lambda_1 t}|X_{t}^{-k\tau}|^2\big)^{p/2}$ and taking into consideration $2(\lambda_1I-A)$ being non-positive definite give
\begin{align*}
    \mathbb{E}[e^{p\lambda_1 t}|X_{t}^{-k\tau}|^p]&\leq e^{p\lambda_1 t}\|\xi\|^p_p+p \int_{-k\tau}^{t}\mathbb{E}\Big[e^{(p-2)\lambda_1 s}|X_{s}^{-k\tau}|^{p-2}e^{2\lambda_1 s}\langle X_{s}^{-k\tau}, f(t,X_{s}^{-k\tau})\rangle\Big]\mathrm{d}s\\
    &+\frac{p(p-1)}{2} \int_{-k\tau}^{t}\mathbb{E}\Big[e^{(p-2)\lambda_1 s}|X_{s}^{-k\tau}|^{p-2}e^{2\lambda_1 s}\Big]g(s)^2\mathrm{d}s.
\end{align*}
Now by the Young inequality
$$a^{p-2}b\leq \frac{p-2}{p}a^p+\frac{2}{p}b^{p/2}, \forall a,b\geq 0,$$
and the inequality from fundamental calculus,
$$\big(a^2+b^2\big)^{\frac{p}{2}}\leq 2^p(a^p+b^p), \forall a,b\geq 0,$$
we have that
\begin{align*}
     \mathbb{E}[e^{p\lambda_1 t}|X_{t}^{-k\tau}|^p]&\leq e^{p\lambda_1 t}\|\xi\|^p_p+p\Big(C_f+\frac{(p-1)\sigma^2}{2}\Big) \int_{-k\tau}^{t}e^{p\lambda_1 s}\mathbb{E}\Big[|X_{s}^{-k\tau}|^{p-2}\big(1+|X_{s}^{-k\tau}|^{2}\big)\Big]\mathrm{d}s\\
     &\le e^{p\lambda_1 t}\|\xi\|^p_p+\gamma_p \int_{-k\tau}^{t}e^{p\lambda_1 s}\big(1+\|X_{s}^{-k\tau}\|^{p}_p\big)\mathrm{d}s\\
     & \leq \hat{K}_1+\gamma_p e^{p\lambda t}+\gamma_p \int_{-k\tau}^{t}\mathbb{E}[e^{p\lambda_1 s}|X_{s}^{-k\tau}|^{p}]\mathrm{d}s,
\end{align*}
where $\hat{K}_1:= e^{-p\lambda_1 k\tau}\big(\|\xi\|^p_p-\gamma_p\big)$. Because of Assumption \ref{as:f_Khasminskii-type}, the rest simply follows the same way as the end of the proof for Lemma \ref{lem:boundedness1}.
\end{proof}
Following a similar argument as in Proposition 5.4 and 5.5 \cite{kruse2016}, we can easily get the following bounds for analysis later.
\begin{prop}\label{prop:use_bound}Let Assumption \ref{as:A} to \ref{as:ini} and Assumption \ref{as:f_Khasminskii-type} hold. Then there exists a positive constant $C_{q,A,f}$ which depends on $q$, $d$, $A$,$C_f$ only, such that
\begin{align}
    \|X_{t_1}^{-k\tau}-X_{t_2}^{-k\tau}\| \leq C_{q,A,f}\big(1+\sup_{k\in\mathbb{N}}\sup_{t\geq -k\tau}\|X_{t}^{-k\tau}\|^q_{2q}\big)|t_2-t_1|^{\frac{1}{2}},
\end{align}
for all $t_1,t_2\geq -k\tau$. Moreover, 
\begin{align}
    \begin{split}
        &\int_{t_1}^{t_2}\big\|A\big(X_{s}^{-k\tau}-X_{t_4}^{-k\tau}\big)+f\big(s,X_{s}^{-k\tau}\big)-f\big(t_3,X_{t_4}^{-k\tau}\big)\big)\big\|\mathrm{d}s\\
        &\leq C_{q,A,f}\big(1+\sup_{k\in\mathbb{N}}\sup_{t\geq -k\tau}\|X_{t}^{-k\tau}\|^{2q-1}_{4q-2}\big)|t_2-t_1|^{\frac{3}{2}},
    \end{split}
\end{align}
for all $t_3,t_4\in [t_1,t_2]$.
\end{prop}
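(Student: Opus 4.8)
The plan is to derive both estimates from the genuine integral (Itô) form of the equation together with the uniform $L^p$-moment bound established in the preceding Proposition. Since $A$ is a bounded operator on $\mathbb{R}^d$ with operator norm $\lambda_d$, equation \eqref{eq:SPDE} may be written, for $t_1\le t_2$, as
\[
X_{t_2}^{-k\tau}-X_{t_1}^{-k\tau}=\int_{t_1}^{t_2}\big(-AX_s^{-k\tau}+f(s,X_s^{-k\tau})\big)\mathrm{d}s+\int_{t_1}^{t_2}g(s)\,\mathrm{d}W_s,
\]
which I take as the starting point rather than the mild formula \eqref{eq:mild}, so as to avoid handling increments of the semigroup. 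First I establish an $L^b$-version of the first inequality for every relevant exponent $b$. For the drift I use Minkowski's integral inequality together with the linear-growth bound $|{-Au}+f(s,u)|\le \lambda_d|u|+|f(s,0)|+L(1+|u|^{q-1})|u|\le C_{q,A,f}(1+|u|^q)$, which follows from Assumption \ref{as:f_Khasminskii-type} (taking $u_2=0$) and the boundedness of $s\mapsto f(s,0)$ guaranteed by continuity and $\tau$-periodicity; taking $\|\cdot\|_b$ and invoking the uniform moment bound, this contributes a term $\le C(1+\sup_{k,t}\|X_t^{-k\tau}\|_{bq}^q)\,|t_2-t_1|$. For the stochastic integral, since $g$ is deterministic and bounded by $\sigma$, the Burkholder--Davis--Gundy inequality (or the Itô isometry when $b=2$) gives $\|\int_{t_1}^{t_2}g\,\mathrm{d}W\|_b\le C\sigma\sqrt{d}\,|t_2-t_1|^{1/2}$. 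Bounding $|t_2-t_1|$ by $|t_2-t_1|^{1/2}$ in the regime $|t_2-t_1|\le 1$ relevant to the scheme (recall $h\in(0,1)$), I obtain
\[
\|X_{t_1}^{-k\tau}-X_{t_2}^{-k\tau}\|_b\le C_{q,A,f}\big(1+\sup_{k,t}\|X_t^{-k\tau}\|_{bq}^q\big)|t_2-t_1|^{1/2},
\]
whose case $b=2$ (so $bq=2q$) is exactly the first claimed inequality.

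For the second inequality I split the integrand into the linear part $A(X_s^{-k\tau}-X_{t_4}^{-k\tau})$ and the nonlinear part $f(s,X_s^{-k\tau})-f(t_3,X_{t_4}^{-k\tau})$, and bound the $L^2(\Omega)$-norm of each uniformly in $s\in[t_1,t_2]$. The linear part is controlled by $\lambda_d\|X_s^{-k\tau}-X_{t_4}^{-k\tau}\|\le C(1+\sup\|X\|_{2q}^q)|t_2-t_1|^{1/2}$ via the $b=2$ estimate just obtained. For the nonlinear part I apply the local Lipschitz bound of Assumption \ref{as:f_Khasminskii-type} directly to the pair $(s,X_s^{-k\tau})$ and $(t_3,X_{t_4}^{-k\tau})$, which crucially permits the two time arguments to differ, giving $|f(s,X_s^{-k\tau})-f(t_3,X_{t_4}^{-k\tau})|\le L(1+|X_s^{-k\tau}|^{q-1}+|X_{t_4}^{-k\tau}|^{q-1})|X_s^{-k\tau}-X_{t_4}^{-k\tau}|$. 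Taking $L^2(\Omega)$-norms and applying Hölder's inequality with the conjugate exponents $a=\tfrac{4q-2}{q-1}$ and $b=\tfrac{4q-2}{q}$ (so that $\tfrac1a+\tfrac1b=\tfrac12$) makes both factors land on moments of order exactly $4q-2$: the growth factor contributes $\|1+|X_s^{-k\tau}|^{q-1}+|X_{t_4}^{-k\tau}|^{q-1}\|_a\le C(1+\sup\|X\|_{4q-2}^{q-1})$ since $(q-1)a=4q-2$, while, since $bq=4q-2$, the increment factor contributes $\|X_s^{-k\tau}-X_{t_4}^{-k\tau}\|_b\le C(1+\sup\|X\|_{4q-2}^{q})|t_2-t_1|^{1/2}$ from the $L^b$ estimate above. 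Multiplying yields the combined moment power $(q-1)+q=2q-1$; integrating this uniform-in-$s$ bound over $[t_1,t_2]$ supplies the extra factor $|t_2-t_1|$, hence the overall $|t_2-t_1|^{3/2}$.

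The routine parts are the growth and moment bookkeeping and the reduction of every term to the uniform bound $\sup_{k,t}\|X_t^{-k\tau}\|_{4q-2}$ finite (which is exactly what the constraint $p\in[4q-2,\infty)$ in Assumption \ref{as:f_Khasminskii-type}, through the preceding Proposition, guarantees). I expect the one genuinely delicate point to be the choice of Hölder exponents in the nonlinear estimate: any other admissible split gives moments of the wrong order — for instance the naive $(4,4)$ split yields orders $4q-4$ and $4q$ rather than a single $4q-2$ — so reproducing the exact constant in the statement is what forces $a=\tfrac{4q-2}{q-1}$ and $b=\tfrac{4q-2}{q}$, and checking $\tfrac1a+\tfrac1b=\tfrac12$ while simultaneously matching both the moment order $4q-2$ and the power $2q-1$ is the crux of the argument.
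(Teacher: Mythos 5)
Your argument is correct and is essentially the proof the paper delegates to the cited reference: the paper itself only remarks that the bounds follow as in Propositions 5.4 and 5.5 of \cite{kruse2016}, and those are proved exactly as you do — integral form of the SDE, Minkowski plus the polynomial growth of the drift, It\^o isometry/BDG for the noise, and the generalized H\"older inequality with exponents $\tfrac{4q-2}{q-1}$ and $\tfrac{4q-2}{q}$ so that every moment lands on $L^{4q-2}$, which is what Assumption \ref{as:f_Khasminskii-type} ($p\ge 4q-2$) supplies. Your restriction to $|t_2-t_1|\le 1$ is harmless, since for larger separations the claimed bound follows trivially from the uniform moment bound.
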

\section{The random periodic solution of the backward Euler-Maruyama scheme}\label{sec: num1}
In this section we will prove that the backward Euler-Maruyama method \eqref{eq:RandM} admits a unique 
discretized random period solution. To achieve this, let us first show the existence and uniqueness of solution to the targeted scheme.

\begin{thm}[Well-posedness]
  \label{thm:well-posedness}
  Let Assumption~\ref{as:A} to \ref{as:f_constant} be satisfied. Then for any $h\in (0,1)$, there exists a unique $\mathbb{R}^d$-valued sequence
  $(\hat{X}^{-k\tau}_{jh})_{h\in \mathbb{N}}$ satisfying the difference equation \eqref{eq:RandM} on the associated time grid $\mathcal{T}^h$. 
\end{thm}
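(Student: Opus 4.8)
The plan is to recast the implicit recursion \eqref{eq:RandM} as the problem of inverting a single nonlinear operator at each time step, and then to exploit monotonicity. Fix an index $j$ and abbreviate $t_{j+1}:=(j+1)h$. Given the preceding iterate, collect all known quantities into the fixed vector $b:=\hat{X}_{-k\tau+jh}^{-k\tau}+g(jh)\Delta W_{-k\tau+jh}$ and write $y$ for the unknown $\hat{X}_{-k\tau+(j+1)h}^{-k\tau}$. Then \eqref{eq:RandM} is equivalent to the equation $G(y)=b$, where
\[
  G(y):=(I+hA)\,y-h\,f(t_{j+1},y).
\]
Thus the entire statement reduces to showing that $G\colon\mathbb{R}^d\to\mathbb{R}^d$ is a bijection for every $h\in(0,1)$.

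The key step is to establish that $G$ is strongly monotone. For arbitrary $y_1,y_2\in\mathbb{R}^d$, using that $A$ is self-adjoint and positive definite (so $\langle v,Av\rangle\ge\lambda_1|v|^2$) together with the one-sided Lipschitz bound of Assumption~\ref{as:f}, I would compute
\[
  \langle y_1-y_2,\,G(y_1)-G(y_2)\rangle
  \ge \big(1+h(\lambda_1-C_f)\big)\,|y_1-y_2|^2 .
\]
By Assumption~\ref{as:f_constant} we have $\lambda_1-C_f>0$, so the monotonicity constant satisfies $1+h(\lambda_1-C_f)\ge 1>0$ for \emph{every} $h>0$. The crucial feature here is that this lower bound is bounded away from zero uniformly in $h$; this is precisely the mechanism by which the implicit scheme avoids any step-size restriction, and it is worth emphasising as the conceptual heart of the argument.

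With strong monotonicity in hand, injectivity is immediate (if $G(y_1)=G(y_2)$ the left-hand side above vanishes, forcing $y_1=y_2$), and coercivity follows by taking $y_2=0$, which gives $\langle y,G(y)\rangle\ge c|y|^2-|G(0)|\,|y|$ with $c=1+h(\lambda_1-C_f)$, so that $\langle y,G(y)\rangle/|y|\to\infty$ as $|y|\to\infty$. Since $f$ is continuous by Assumption~\ref{as:f}, $G$ is continuous. A continuous, coercive, strongly monotone map on $\mathbb{R}^d$ is a homeomorphism by the finite-dimensional Browder--Minty (uniform monotonicity) theorem, so $G(y)=b$ admits a unique solution for each $b$, proving that a single step of \eqref{eq:RandM} is uniquely solvable.

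I would then close by induction: starting from the prescribed value $\hat{X}_{-k\tau}^{-k\tau}=\xi$, each iterate is uniquely determined by its predecessor and the corresponding Brownian increment through $y=G^{-1}(b)$, yielding a unique sequence $(\hat{X}^{-k\tau}_{jh})_{j\in\mathbb{N}}$ on $\mathcal{T}^h$. I do not expect a genuine obstacle; the one point deserving care is the surjectivity of $G$, since monotonicity and continuity deliver injectivity for free but surjectivity requires the coercivity estimate combined with a topological input (degree theory, or the monotone-operator theorem quoted above). I would also remark that, because $G^{-1}$ is continuous and the data $\hat{X}_{-k\tau+jh}^{-k\tau}$ and $\Delta W_{-k\tau+jh}$ are measurable, the solution map $b\mapsto G^{-1}(b)$ preserves measurability, so each iterate is $\mathcal{F}$-measurable as required.
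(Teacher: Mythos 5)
Your proposal is correct and follows essentially the same route as the paper: both reduce each step to inverting the map $G(y)=(I+hA)y-hf(t_{j+1},y)$, establish the uniform monotonicity bound $\langle y_1-y_2,G(y_1)-G(y_2)\rangle\ge(1+h(\lambda_1-C_f))|y_1-y_2|^2>0$ via Assumptions~\ref{as:A}, \ref{as:f} and \ref{as:f_constant}, and invoke the finite-dimensional uniform monotonicity (Browder--Minty type) theorem to obtain a unique $G^{-1}$, then iterate. Your additional remarks on coercivity, surjectivity and measurability are consistent with, and slightly more explicit than, the paper's appeal to the cited uniform monotonicity theorem.
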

\begin{proof}
  Let $h \in (0,1)$ and define $G \colon \mathbb{R}^d \to \mathbb{R}^d$ by $G_(\zeta) =
  \zeta +Ah\zeta- h f(t, \zeta)$ for all $\zeta \in \mathbb{R}^d$ and $t\in[0,\tau)$. Then it holds
  \begin{align*}
    \langle G_t(\varsigma) - G_t(\zeta), \varsigma - \zeta \rangle
    = (I+Ah)|\varsigma - \zeta|^2 - h   
    \langle f(t,\varsigma) - f(t,\zeta), \varsigma - \zeta \rangle
    \ge (1 +\lambda_1h- C_f h) |\varsigma - \zeta|^2.
  \end{align*}
  Because of Assumption \ref{as:f_constant}, we have $L_{G_t} :=1 +\lambda_1h- C_f h>1$. Hence,
  the uniform monotonicity theorem \footnote{For a proof we refer to \cite{ortega2000,stuart1996}.} (c.f. Proposition 3.5 in \cite{wu2020}) is applicable. In particular,
  the sequence $(\hat{X}^{-k\tau}_{jh})_{h\in \mathbb{N}}$ defined by 
  \begin{align*}
    \hat{X}^{-k\tau}_{(j+1)h} := G_{(j+1)h}^{-1}\big(\hat{X}^{-k\tau}_{jh} + g(jh)\Delta W_{-k\tau+jh})
  \end{align*}
  for every $j \in \mathbb{N}$ satisfies \eqref{eq:RandM}.
\end{proof}
The next Lemma claims there is a uniform bound for the second moment of the numerical solution under necessary assumptions.
\begin{lemma}\label{lem:boundedness}
Under Assumption \ref{as:A} to \ref{as:ini}, for any $h\in (0,1)$, it holds for the backward Euler-Maruyama method \eqref{eq:RandM} on $\mathcal{T}^h$ that
\begin{equation} 
    \sup_{k,N\in\mathbb{N}}\mathbb{E}[|\hat{X}_{-k\tau+Nh}^{-k\tau} (\xi)|^2]< \infty.
\end{equation}
\end{lemma}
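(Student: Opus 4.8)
The plan is to mirror the continuous-time argument of Lemma \ref{lem:boundedness1}, but carried out at the level of the discrete recursion \eqref{eq:RandM} using the algebraic identity \eqref{eqn:eqa}. First I would rewrite the scheme in the implicit form $\hat{X}_{-k\tau+(j+1)h}^{-k\tau} + Ah\,\hat{X}_{-k\tau+(j+1)h}^{-k\tau} - h f((j+1)h, \hat{X}_{-k\tau+(j+1)h}^{-k\tau}) = \hat{X}_{-k\tau+jh}^{-k\tau} + g(jh)\Delta W_{-k\tau+jh}$. Abbreviating $\hat{X}_{j+1}:=\hat{X}_{-k\tau+(j+1)h}^{-k\tau}$ and $\hat{X}_j:=\hat{X}_{-k\tau+jh}^{-k\tau}$, I would take the inner product of both sides with $\hat{X}_{j+1}$ and apply \eqref{eqn:eqa} with $b=\hat{X}_{j+1}$ and $a=\hat{X}_j+g(jh)\Delta W_{-k\tau+jh}$, which converts $2\langle b-a,b\rangle$ into the telescoping combination $|\hat{X}_{j+1}|^2 - |a|^2 + |\hat{X}_{j+1}-a|^2$. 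This is the discrete analogue of the It\^o expansion used in the continuous case, and it is precisely why \eqref{eqn:eqa} was flagged in the Preliminaries as the crucial equality.

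Next I would use the monotonicity and coercivity bounds of Assumption \ref{as:f} to control the drift contribution. The $Ah\langle \hat{X}_{j+1}, \hat{X}_{j+1}\rangle$ term gives $2\lambda_1 h |\hat{X}_{j+1}|^2$ as a lower bound (using that $A$ is positive definite with smallest eigenvalue $\lambda_1$), while the inequality $\langle u, f(t,u)\rangle \le C_f(1+|u|^2)$ bounds the nonlinear term by $C_f h (1+|\hat{X}_{j+1}|^2)$. After rearranging, I expect an inequality of the schematic shape
\begin{align*}
(1 + 2\lambda_1 h - 2C_f h)\,\mathbb{E}|\hat{X}_{j+1}|^2 \le \mathbb{E}|\hat{X}_j|^2 + \mathbb{E}|g(jh)\Delta W_{-k\tau+jh}|^2 + 2 C_f h + (\text{cross terms}),
\end{align*}
where the $|\hat{X}_{j+1}-a|^2$ term is nonnegative and simply dropped. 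The cross term $\langle \hat{X}_j, g(jh)\Delta W_{-k\tau+jh}\rangle$ has zero expectation by the independence of the increment from $\mathcal{F}$-measurable past data, and $\mathbb{E}|g(jh)\Delta W|^2 = |g(jh)|^2 h \le \sigma^2 h$ by Assumption \ref{as:g}. Setting $\beta := (1+2\lambda_1 h - 2C_f h)^{-1}$, Assumption \ref{as:f_constant} ($C_f<\lambda_1$) guarantees $\beta<1$, i.e. the recursion is a genuine contraction with a contraction factor strictly less than one.

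The final step is to iterate the one-step estimate $\mathbb{E}|\hat{X}_{j+1}|^2 \le \beta\,\mathbb{E}|\hat{X}_j|^2 + \beta(\sigma^2+2C_f)h$ down to the initial condition $\hat{X}_0=\xi$, yielding a geometric series
\begin{align*}
\mathbb{E}|\hat{X}_N|^2 \le \beta^N \mathbb{E}|\xi|^2 + \beta(\sigma^2+2C_f)h \sum_{i=0}^{N-1}\beta^i \le \beta^N C_\xi^2 + \frac{\beta(\sigma^2+2C_f)h}{1-\beta}.
\end{align*}
Since $\beta<1$, the first term is bounded by $C_\xi^2$ (using Assumption \ref{as:ini}) uniformly in $N$, and a short computation shows $\beta h/(1-\beta) = h/\big(2(\lambda_1-C_f)h\big) = 1/(2(\lambda_1-C_f))$, so the second term is independent of $N$, $k$, and $h$. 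Taking the supremum over $k,N\in\mathbb{N}$ gives the claimed finite bound. The main obstacle I anticipate is bookkeeping rather than conceptual: care is needed to ensure that the noise increment $\Delta W_{-k\tau+jh}$ is genuinely independent of $\hat{X}_j$ so that the martingale cross term vanishes in expectation, and to verify that the extracted contraction constant $\beta<1$ is uniform in $h\in(0,1)$ — both of which follow because $\lambda_1-C_f>0$ is a fixed positive gap that the implicit method preserves for every step size, exactly the stiffness advantage advertised in the introduction.
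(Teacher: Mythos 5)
Your proposal is correct and follows essentially the same route as the paper: the identity \eqref{eqn:eqa} applied to the implicit recursion, monotonicity/coercivity from Assumptions \ref{as:f}--\ref{as:f_constant} to obtain the one-step contraction $(1+2h(\lambda_1-C_f))\mathbb{E}|\hat{X}_{j+1}|^2\le\mathbb{E}|\hat{X}_j|^2+(\sigma^2+2C_f)h$, and iteration to the uniform bound $C_\xi^2+\frac{\sigma^2+2C_f}{2(\lambda_1-C_f)}$. The only cosmetic difference is that you absorb the noise increment into the term $a$ of the identity and expand $|a|^2$ directly, whereas the paper takes $a=\hat{X}_{N-1}$ and uses Young's inequality to cancel the $|\hat{X}_N-\hat{X}_{N-1}|^2$ term; both yield the identical recursion.
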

\begin{proof}
First note that from \eqref{eqn:eqa} we have that for any $N\in \mathbb{N}$
\begin{align}
\begin{split}
&|\hat{X}_{-k\tau+Nh}^{-k\tau}|^2-|\hat{X}_{-k\tau+(N-1)h}^{-k\tau}|^2+|\hat{X}_{-k\tau+Nh}^{-k\tau}-\hat{X}_{-k\tau+(N-1)h}^{-k\tau}|^2\\
&=2\langle \hat{X}_{-k\tau+Nh}^{-k\tau}-\hat{X}_{-k\tau+(N-1)h}^{-k\tau},\hat{X}_{-k\tau+Nh}^{-k\tau} \rangle.
\end{split}
\end{align}
From \eqref{eq:RandM} we have that
\begin{align}\label{eq:bound_eq}
\begin{split}
      & 2\langle \hat{X}_{-k\tau+Nh}^{-k\tau}-\hat{X}_{-k\tau+(N-1)h}^{-k\tau},\hat{X}_{-k\tau+Nh}^{-k\tau} \rangle\\
   &=-2h\langle A\hat{X}_{-k\tau+Nh}^{-k\tau},\hat{X}_{-k\tau+Nh}^{-k\tau} \rangle+2h\langle f\big((j+1)h,\hat{X}_{-k\tau+Nh}^{-k\tau}\big),\hat{X}_{-k\tau+Nh}^{-k\tau}
   \rangle\\
   &\ +2\langle g\big((N-1)h\big)\Delta W_{-k\tau+(N-1)h},\hat{X}_{-k\tau+Nh}^{-k\tau}\rangle. 
\end{split}
\end{align}
Note that $\mathbb{E}\langle g\big((N-1)h\big)\Delta W_{-k\tau+(N-1)h},\hat{X}_{-k\tau+(N-1)h}^{-k\tau}\rangle=0$. Taking the expectation of both sides of \eqref{eq:bound_eq} and making use of Assumption \ref{as:f} give
\begin{align*}
          &\|\hat{X}_{-k\tau+Nh}^{-k\tau}\|^2-\|\hat{X}_{-k\tau+(N-1)h}^{-k\tau}\|^2+\|\hat{X}_{-k\tau+Nh}^{-k\tau}-\hat{X}_{-k\tau+(N-1)h}^{-k\tau}\|^2\\
          &=2\mathbb{E}\langle \hat{X}_{-k\tau+Nh}^{-k\tau}-\hat{X}_{-k\tau+(N-1)h}^{-k\tau},\hat{X}_{-k\tau+Nh}^{-k\tau} \rangle\\
   &\leq -2h\mathbb{E} \langle (A-C_fI)\hat{X}_{-k\tau+Nh}^{-k\tau},\hat{X}_{-k\tau+Nh}^{-k\tau} \rangle+h(2C_f+\sigma^2)+\|\hat{X}_{-k\tau+Nh}^{-k\tau}-\hat{X}_{-k\tau+(N-1)h}^{-k\tau}\|^2. 
\end{align*}
Then cancelling the same term on both side gives
\begin{align*}
    &\|\hat{X}_{-k\tau+Nh}^{-k\tau}\|^2-\|\hat{X}_{-k\tau+(N-1)h}^{-k\tau}\|^2\\
    &\leq -2h\mathbb{E} \langle (A-C_fI)\hat{X}_{-k\tau+Nh}^{-k\tau},\hat{X}_{-k\tau+Nh}^{-k\tau} \rangle+h(2C_f+\sigma^2)\\
    &\leq -2h (\lambda_1-C_f)\|\hat{X}_{-k\tau+Nh}^{-k\tau}\|^2+h(2C_f+\sigma^2).
\end{align*}
Let $\alpha:=\frac{2C_f+\sigma^2}{2(\lambda_1-C_f)}$. Rearranging the terms above gives
\begin{align}
    \big(1+2h(\lambda_1-C_f)\big)\big(\|\hat{X}_{-k\tau+Nh}^{-k\tau}\|^2-\alpha\big)\le \|\hat{X}_{-k\tau+(N-1)h}^{-k\tau}\|^2-\alpha.
\end{align}
By iteration, this leads to
\begin{align}
    \|\hat{X}_{-k\tau+Nh}^{-k\tau}\|^2\leq \frac{1}{\big(1+2h(\lambda_1-C_f)\big)^N}\big(\|\xi\|^2-\alpha\big)+\alpha.
\end{align}
Because of Assumption \ref{as:f_constant} and \ref{as:ini}, the term on the right hand side above can be bounded by $\|\xi\|^2+\alpha$, which is independent of $k$, $N$ and $h$.
\end{proof}
The next result shows two numerical solutions starting from different initial conditions can be arbitrarily close after sufficiently many iterations.
\begin{lemma}\label{lem:sta1}
Under Assumption \ref{as:A} to \ref{as:ini}, define $\hat{X}_{-k\tau+Nh}^{-k\tau}$ and $\hat{Y}_{-k\tau+Nh}^{-k\tau}$ solutions of the backward Euler-Maruyama scheme on $\mathcal{T}^h$. Then there exists an $N^*$ such that for any $N\geq N^*$, $\|\hat{X}_{-k\tau+Nh}^{-k\tau}-\hat{Y}_{-k\tau+Nh}^{-k\tau}\|<\epsilon$.
\end{lemma}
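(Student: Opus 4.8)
The plan is to mirror the one-step contraction argument used for the numerical boundedness result in Lemma \ref{lem:boundedness}, now applied to the \emph{difference} of the two trajectories. First I would set $\hat{E}_{-k\tau+Nh}^{-k\tau}:=\hat{X}_{-k\tau+Nh}^{-k\tau}-\hat{Y}_{-k\tau+Nh}^{-k\tau}$ and subtract the two instances of the recursion \eqref{eq:RandM}. The crucial simplification is that both schemes are driven by the \emph{same} Brownian increments $\Delta W_{-k\tau+(N-1)h}$, so the stochastic forcing and the coefficient $g$ cancel entirely, and $\hat{E}$ obeys the noise-free recursion
$$\hat{E}_{-k\tau+Nh}^{-k\tau}=\hat{E}_{-k\tau+(N-1)h}^{-k\tau}-Ah\,\hat{E}_{-k\tau+Nh}^{-k\tau}+h\big(f(Nh,\hat{X}_{-k\tau+Nh}^{-k\tau})-f(Nh,\hat{Y}_{-k\tau+Nh}^{-k\tau})\big).$$
Existence of both trajectories, hence of $\hat{E}$, is guaranteed by Theorem \ref{thm:well-posedness}.

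Next I would apply the identity \eqref{eqn:eqa} with $b=\hat{E}_{-k\tau+Nh}^{-k\tau}$ and $a=\hat{E}_{-k\tau+(N-1)h}^{-k\tau}$, substitute the recursion into the resulting inner product, and estimate the two contributions pathwise: the linear term gives $-2h\langle A\hat{E}_{-k\tau+Nh}^{-k\tau},\hat{E}_{-k\tau+Nh}^{-k\tau}\rangle\le -2h\lambda_1|\hat{E}_{-k\tau+Nh}^{-k\tau}|^2$ by Assumption \ref{as:A}, while the nonlinear term is controlled by $2h\langle f(Nh,\hat{X}_{-k\tau+Nh}^{-k\tau})-f(Nh,\hat{Y}_{-k\tau+Nh}^{-k\tau}),\hat{E}_{-k\tau+Nh}^{-k\tau}\rangle\le 2hC_f|\hat{E}_{-k\tau+Nh}^{-k\tau}|^2$ by the one-sided Lipschitz condition in Assumption \ref{as:f}. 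Since the recursion for $\hat{E}$ carries no martingale increment, taking expectations is immediate; dropping the nonnegative term $\|\hat{E}_{-k\tau+Nh}^{-k\tau}-\hat{E}_{-k\tau+(N-1)h}^{-k\tau}\|^2$ on the left then yields the contraction $\big(1+2h(\lambda_1-C_f)\big)\|\hat{E}_{-k\tau+Nh}^{-k\tau}\|^2\le \|\hat{E}_{-k\tau+(N-1)h}^{-k\tau}\|^2$. Iterating from the initial difference $\hat{E}_{-k\tau}^{-k\tau}=\xi-\eta$ gives the geometric bound
$$\|\hat{E}_{-k\tau+Nh}^{-k\tau}\|^2\le \frac{\|\xi-\eta\|^2}{\big(1+2h(\lambda_1-C_f)\big)^N}.$$
Because $\lambda_1>C_f$ by Assumption \ref{as:f_constant}, the factor strictly exceeds one, so the right-hand side decays geometrically in $N$; bounding the numerator uniformly via $\|\xi-\eta\|\le 2C_\xi$ from Assumption \ref{as:ini}, I can choose $N^*$ (depending on $\epsilon,h,C_\xi,\lambda_1,C_f$, but not on $k$) so large that the bound falls below $\epsilon$ for every $N\ge N^*$, which is the claim.

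I do not expect a genuine obstacle here: the argument is structurally identical to Lemma \ref{lem:boundedness} and is the discrete counterpart of Lemma \ref{lem:stable1}. The only real observation is that driving both trajectories by common noise eliminates the stochastic term outright, so none of the expectation or martingale bookkeeping that appears in the continuous stability estimate is needed. The single point to state with care is that the one-sided Lipschitz bound is invoked at the common time argument $Nh$ and at the implicitly defined spatial points $\hat{X}_{-k\tau+Nh}^{-k\tau},\hat{Y}_{-k\tau+Nh}^{-k\tau}$, which is exactly the form of Assumption \ref{as:f}.
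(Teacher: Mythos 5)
Your argument is correct and follows essentially the same route as the paper: subtract the two recursions, apply the identity \eqref{eqn:eqa} to the difference $D_N$, use Assumption \ref{as:A} and the one-sided Lipschitz condition to obtain the one-step contraction $\big(1+2h(\lambda_1-C_f)\big)\|D_N\|^2\le\|D_{N-1}\|^2$, and iterate. Your write-up is in fact slightly more careful than the paper's (you keep the norms squared in the contraction step, note explicitly that the common noise cancels, and bound the initial difference via Assumption \ref{as:ini} so that $N^*$ is independent of $k$), but there is no substantive difference in approach.
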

\begin{proof}
Define $D_N:=\hat{X}_{-k\tau+Nh}^{-k\tau}-\hat{Y}_{-k\tau+Nh}^{-k\tau}$. Let us use \eqref{eqn:eqa} again, which allows us to examine the following term:
\begin{align*}
   2\mathbb{E} \langle D_N-D_{N-1} ,D_N\rangle&=-2h\mathbb{E}\langle AD_N,D_N\rangle\\
   &+2h\mathbb{E}\langle f\big((j+1)h,\hat{X}_{-k\tau+Nh}^{-k\tau}\big)-f\big((j+1)h,\hat{Y}_{-k\tau+Nh}^{-k\tau}\big),D_N
   \rangle\\
   &\leq 2h\mathbb{E}\langle(-A+C_fI)D_N,D_N\rangle.
\end{align*}
This leads to
\begin{align*}
    (1+2h(\lambda_1-C_f))\|D_N\|\leq \|D_{N-1}\|.
\end{align*}
By iteration we have
\begin{align*}
    \|D_N\|\leq \frac{1}{(1+2h(\lambda_1-C_f))^N}\|D_{0}\|=\frac{1}{(1+2h(\lambda_1-C_f))^N}\|\xi-\eta\|.
\end{align*}
Because of $\lambda_1>C_f$, the assertion follows.
\end{proof}
\begin{proof}[Proof of Theorem \ref{thm:main2}]
First we shall show that there exists a limit of $\hat{X}_{0}^{-k\tau}$ in $L^2(\Omega)$. Note from Lemma \ref{lem:boundedness}, it holds $\hat{X}_{-k\tau+Nh}^{-k\tau}\in L^2(\Omega)$ for $N\in\mathbb{N}$.  For $t=-k\tau+Nh$, by using the semi-flow property we have for $m\in\mathbb{N}$
\begin{align*}
    \hat{X}^{-k\tau-m\tau}_t=\hat{X}^{-k\tau}_t\circ \hat{X}^{-k\tau-m\tau}_{-k\tau}.
\end{align*}
Both sides are the same process and $\hat{X}^{-k\tau}_t$ on the RHS has a different initial condition.
Denote $M:=nk$, then by Lemma \ref{lem:sta1} we have for $\epsilon>0$ there exists a $M^*$ such that for $M\geq M^*$
\begin{align*}
    \big\|\hat{X}^{-k\tau-m\tau}_t-\hat{X}^{-k\tau}_t\big\|=\big\|\hat{X}^{-(M+nm)h}_t-\hat{X}^{-Mh}_t\big\|<\epsilon.
\end{align*}
Then we construct the Cauchy sequence $(\hat{X}^{-k\tau}_t)_{k\in \mathbb{N}}$ converging to some limit $\hat{X}^*$ in $L^2(\Omega)$. Also it is not hard to show that the convergence is independent of the initial point. For $k\to\infty$, we have from  Lemma \ref{lem:sta1}
\begin{align*}
   \big \|\hat{X}^*-\hat{X}^{-k\tau}_t(\eta)\big\|\leq  \big \|\hat{X}^*-\hat{X}^{-k\tau}_t(\xi)\big\|+ \big \|\hat{X}^{-k\tau}_t(\xi)-\hat{X}^{-k\tau}_t(\eta)\big\|\to 0.
\end{align*}
Now let us verify the random periodicity of the backward Euler-Maruyama scheme by induction. Let us examine two terms $\hat{X}_{-k\tau+Nh}^{-k\tau}(\theta_\tau \omega)$ and $\hat{X}_{-(k-1)\tau+Nh}^{-(k-1)\tau}(\omega)$, where $t=-k\tau+Nh$. For $\hat{X}_{-k\tau+Nh}^{-k\tau}(\theta_\tau \omega)$ we have the expression
\begin{align*}
    \hat{X}_{-k\tau+Nh}^{-k\tau}(\theta_\tau\omega)& = \hat{X}_{-k\tau+(N-1)h}^{-k\tau}(\theta_\tau\omega)-Ah\hat{X}_{-k\tau+Nh}^{-k\tau}(\theta_\tau\omega) \\
   &+h f\big(Nh,   \bar{X}_{-k\tau+Nh}^{-k\tau}(\theta_\tau\omega) \big)+ 
    g(jh)\Delta W_{-k\tau+(N-1)h}(\theta_\tau\omega),
\end{align*}
where 
$$\Delta W_{-k\tau+(N-1)h}(\theta_\tau\omega)=W_{-(k-1)\tau+Nh}-W_{-(k-1)\tau+(N-1)h}=\Delta W_{-(k-1)\tau+(N-1)h}(\omega).$$
For $\hat{X}_{-(k-1)\tau+Nh}^{-(k-1)\tau}(\omega)$, we have its expression given by
\begin{align*}
     \hat{X}_{-(k-1)\tau+Nh}^{-(k-1)\tau}(\omega) =& \hat{X}_{-(k-1)\tau+(N-1)h}^{-(k-1)\tau}(\omega)-Ah\hat{X}_{-(k-1)\tau+Nh}^{-(k-1)\tau}(\omega)\\
    & f( Nh,   \bar{X}_{-(k-1)\tau+Nh}^{-(k-1)\tau} ( \omega)\big)+
    g(jh)\Delta W_{-(k-1)\tau+(N-1)h}(\omega).
\end{align*}
By induction and by the pathwise uniqueness of the solution of the backward Euler-Maruyama scheme (Theorem \ref{thm:well-posedness}), we have that
\begin{align*}
   \hat{X}_{-k\tau+Nh}^{-k\tau}\big(\theta_\tau \omega,\xi(\theta_\tau \omega)\big) =\theta_\tau\hat{X}_{-k\tau+Nh}^{-k\tau}\big(\omega,\xi(\omega)\big) = \hat{X}_{-(k-1)\tau+Nh}^{-(k-1)\tau}\big( \omega,\xi(\omega)\big). 
\end{align*}
Finally from \eqref{eq:limit} and the fact $t=-k\tau+Nh$, we have
\begin{align*}
   & \big\| \hat{X}_{t}^*(\theta_\tau \omega)-\hat{X}_{t+\tau}^{*}( \omega)\big\|\\
   &\leq  \big\| X_{t}^{-k\tau}\big(\theta_\tau \omega,\xi(\theta_\tau \omega)\big)-\hat{X}_{t}^*(\theta_\tau \omega)\big\|+\big\| \hat{X}_{t+\tau}^{-(k-1)\tau}\big( \omega,\xi(\omega)\big)-\hat{X}_{t+\tau}^{*}( \omega)\big\|\overset{k\to \infty}{\longrightarrow}0.
\end{align*}
Therefore,  $\hat{X}_{t}^*(\theta_\tau \omega)=\hat{X}_{t+\tau}^{*}( \omega)$ $\mathbb{P}$-a.s.
\end{proof}
\section{Error analysis}
\begin{thm}\label{thm:error} Under Assumption \ref{as:A} to \ref{as:ini} and Assumption \ref{as:f_Khasminskii-type}, for any $h\in(0,1)$ with $\tau=nh$, $n\in\mathbb{N}$, there exists a constant $C$ that depends on $q,A,f,g$ and $d$ such that the backward Euler-Maruyama method \eqref{eq:RandM} approximates the true solution of \eqref{eq:SPDE} on $\mathcal{T}^h$ with  
\begin{align}\label{eq:error1}
 \sup_{k,N}\big\|X^{-k\tau}_{-k\tau+Nh}-\hat{X}^{-k\tau}_{-k\tau+Nh}\big\|\leq C h^{1/2}.
\end{align}
\end{thm}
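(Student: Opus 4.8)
The plan is to set up a one-step error recursion on the time grid and close it through the algebraic identity \eqref{eqn:eqa}, using the strict monotonicity of the drift to obtain a bound uniform in both $k$ and $N$. Writing $t_N := -k\tau + Nh$ and $e_N := X^{-k\tau}_{t_N} - \hat{X}^{-k\tau}_{t_N}$, I would integrate \eqref{eq:SPDE} over $[t_{N-1}, t_N]$ and subtract the scheme \eqref{eq:RandM} to arrive at
\[
  e_N = e_{N-1} - Ah\, e_N + h\big(f(t_N, X^{-k\tau}_{t_N}) - f(t_N, \hat{X}^{-k\tau}_{t_N})\big) + R_N + S_N,
\]
where $R_N := \int_{t_{N-1}}^{t_N}\big[A(X^{-k\tau}_{t_N} - X^{-k\tau}_s) + f(s, X^{-k\tau}_s) - f(t_N, X^{-k\tau}_{t_N})\big]\mathrm{d}s$ is the deterministic drift-discretization remainder and $S_N := \int_{t_{N-1}}^{t_N}\big(g(s) - g(t_{N-1})\big)\mathrm{d}W_s$ is the diffusion remainder. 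Since both processes start from the same datum $\xi$, we have $e_0 = 0$.

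Applying \eqref{eqn:eqa} with $b = e_N$ and $a = e_{N-1}$ gives $|e_N|^2 - |e_{N-1}|^2 + |e_N - e_{N-1}|^2 = 2\langle e_N - e_{N-1}, e_N\rangle$; I would substitute the recursion into the right-hand side. The coercivity $\langle A e_N, e_N\rangle \geq \lambda_1 |e_N|^2$ from Assumption \ref{as:A} together with the one-sided Lipschitz bound $\langle f(t_N, X^{-k\tau}_{t_N}) - f(t_N, \hat{X}^{-k\tau}_{t_N}), e_N\rangle \leq C_f |e_N|^2$ from Assumption \ref{as:f} produces the dissipative contribution $-2h(\lambda_1 - C_f)|e_N|^2$. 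The remainders are estimated after taking expectations: $\|R_N\| \leq C h^{3/2}$ by Minkowski's inequality combined with Proposition \ref{prop:use_bound}, whose moment prefactor is finite thanks to the $p$-th moment bound of Section \ref{sec:sol} (valid since $p \geq 4q-2$), while $\|S_N\|^2 = \int_{t_{N-1}}^{t_N}|g(s) - g(t_{N-1})|^2\mathrm{d}s \leq \sigma^2 h^3$ by the It\^o isometry and the Lipschitz continuity of $g$ in Assumption \ref{as:g}.

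The delicate point — and the reason the implicit scheme is the appropriate choice — is the stochastic cross term $\mathbb{E}\langle S_N, e_N\rangle$. Because the method is implicit, $e_N$ is $\mathcal{F}^{t_N}$-measurable and genuinely correlated with $S_N$, so this expectation does not vanish. I would split $\mathbb{E}\langle S_N, e_N\rangle = \mathbb{E}\langle S_N, e_{N-1}\rangle + \mathbb{E}\langle S_N, e_N - e_{N-1}\rangle$: the first term is zero because $S_N$ has mean zero and is independent of $\mathcal{F}^{t_{N-1}}$ while $e_{N-1}$ is $\mathcal{F}^{t_{N-1}}$-measurable, and the second is controlled by Young's inequality so that the resulting $\|e_N - e_{N-1}\|^2$ exactly cancels the quadratic term supplied by \eqref{eqn:eqa}. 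This cancellation is precisely what the extra square in that identity is for, and I expect it to be the main obstacle.

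Finally, absorbing a small multiple of $\|e_N\|^2$ arising from Young's inequality on the $R_N$ term into the dissipative contribution, I expect to reach $(1 + h(\lambda_1 - C_f))\|e_N\|^2 \leq \|e_{N-1}\|^2 + C' h^2$ with $C'$ independent of $k, N, h$. Setting $\mu := \lambda_1 - C_f > 0$, which is positive by Assumption \ref{as:f_constant}, and iterating from $e_0 = 0$ yields $\|e_N\|^2 \leq C' h^2 \sum_{i=1}^{N}(1+h\mu)^{-i} \leq \frac{C'}{\mu} h$, where the geometric sum is bounded uniformly in $N$ precisely because $\mu > 0$. Taking square roots and noting that every constant is independent of $k$ and $N$ delivers $\sup_{k,N}\|e_N\| \leq C h^{1/2}$, the claimed bound.
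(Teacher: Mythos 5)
Your proposal is correct and follows essentially the same route as the paper: the same one-step decomposition of the exact solution against the scheme, the identity \eqref{eqn:eqa} applied to $e_N$, the monotonicity of $-A+f$ to produce the contraction factor $1+h(\lambda_1-C_f)$, Proposition \ref{prop:use_bound} for the $O(h^{3/2})$ drift remainder, and a geometric iteration from $e_0=0$. The only (harmless) deviation is your martingale splitting of $\mathbb{E}\langle S_N,e_N\rangle$ against the $\|e_N-e_{N-1}\|^2$ term, where the paper simply applies Young's inequality with weight $\epsilon_0^2=h(\lambda_1-C_f)$ to all three remainder cross terms at once; both give a per-step contribution of at most $O(h^2)$, and the drift remainder is the bottleneck for the rate $h^{1/2}$ either way.
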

\begin{proof}
First note that
\begin{align}
\begin{split}
        X^{-k\tau}_{-k\tau+Nh}&=X^{-k\tau}_{-k\tau+(N-1)h}-\int_{-k\tau+(N-1)h}^{-k\tau+Nh}AX^{-k\tau}_{s}\mathrm{d}s\\
    &+\int_{-k\tau+(N-1)h}^{-k\tau+Nh}f\big(s,X^{-k\tau}_{s}\big)\mathrm{d}s+\int_{-k\tau+(N-1)h}^{-k\tau+Nh}g(s)\mathrm{d}W_s\\
    &=X^{-k\tau}_{-k\tau+(N-1)h}-\int_{-k\tau+(N-1)h}^{-k\tau+Nh}A\big(X^{-k\tau}_{s}-X^{-k\tau}_{-k\tau+Nh}\big)\mathrm{d}s-hAX^{-k\tau}_{-k\tau+Nh}\\
    &+\int_{-k\tau+(N-1)h}^{-k\tau+Nh}\Big(f\big(s,X^{-k\tau}_{s}\big)-f\big(s,X^{-k\tau}_{-k\tau+Nh}\big)\Big)\mathrm{d}s+hf\big(s,X^{-k\tau}_{-k\tau+Nh}\big)\\
    &+\int_{-k\tau+(N-1)h}^{-k\tau+Nh}\Big(g(s)-g\big((N-1)h\big)\Big)\mathrm{d}W_s+g\big((N-1)h\big)\Delta W_{-k\tau+(N-1)h}.
\end{split}
\end{align}
Define $e_N:=X^{-k\tau}_{-k\tau+Nh}-\hat{X}^{-k\tau}_{-k\tau+Nh}$. Then 
\begin{align*}
   &2\mathbb{E} \langle e_N-e_{N-1} ,e_N\rangle\\
   &=-2h\mathbb{E}\langle Ae_N,e_N\rangle+2h\mathbb{E}\langle f\big((j+1)h,X_{-k\tau+Nh}^{-k\tau}\big)-f\big((j+1)h,\hat{X}_{-k\tau+Nh}^{-k\tau}\big),e_N
   \rangle\\
   &+2\mathbb{E}\Big\langle-\int_{-k\tau+(N-1)h}^{-k\tau+Nh}A\big(X^{-k\tau}_{s}-X^{-k\tau}_{-k\tau+Nh}\big)\mathrm{d}s ,e_N\Big\rangle\\
   &+2\mathbb{E}\Big\langle\int_{-k\tau+(N-1)h}^{-k\tau+Nh}\Big(f\big(s,X^{-k\tau}_{s}\big)-f\big(s,X^{-k\tau}_{-k\tau+Nh}\big)\Big)\mathrm{d}s ,e_N\Big\rangle\\
   &+2\mathbb{E}\Big\langle\int_{-k\tau+(N-1)h}^{-k\tau+Nh}\Big(g(s)-g\big((N-1)h\big)\Big)\mathrm{d}W_s ,e_N\Big\rangle.
  \end{align*}
By the Young's inequality 
$$2ab\leq \epsilon^2a^2+\frac{b^2}{\epsilon^2}, \forall a,b>0,$$
and Assumption \ref{as:f}, we are able to choose $\epsilon_0^2:=h(\lambda_1-C_f)$ such that
\begin{align*}
&2\mathbb{E} \langle e_N-e_{N-1} ,e_N\rangle\\
     &\leq 2h\mathbb{E}\langle(-A+C_fI)e_N,e_N\rangle+3\epsilon_0^2 \|e_N\|^2\\
   &+\frac{1}{\epsilon^2_0}\Big\|-\int_{-k\tau+(N-1)h}^{-k\tau+Nh}A\big(X^{-k\tau}_{s}-X^{-k\tau}_{-k\tau+Nh}\big)\mathrm{d}s\Big\|^2\\
   &+\frac{1}{\epsilon^2_0}\Big\|\int_{-k\tau+(N-1)h}^{-k\tau+Nh}\Big(f\big(s,X^{-k\tau}_{s}\big)-f\big(s,X^{-k\tau}_{-k\tau+Nh}\big)\Big)\mathrm{d}s\Big\|^2\\
   &+\frac{1}{\epsilon^2_0}\Big\|\int_{-k\tau+(N-1)h}^{-k\tau+Nh}\Big(g(s)-g\big((N-1)h\big)\Big)\mathrm{d}W_s\Big\|^2.
\end{align*}
By Proposition \ref{prop:use_bound}, we know there exists a constant $C$ depending on $q$, $A$, $f$ and $g$ such that
\begin{align*}
    &\Big\|-\int_{-k\tau+(N-1)h}^{-k\tau+Nh}A\big(X^{-k\tau}_{s}-X^{-k\tau}_{-k\tau+Nh}\big)\mathrm{d}s\Big\|^2\\
   &+\Big\|\int_{-k\tau+(N-1)h}^{-k\tau+Nh}\Big(f\big(s,X^{-k\tau}_{s}\big)-f\big(s,X^{-k\tau}_{-k\tau+Nh}\big)\Big)\mathrm{d}s\Big\|^2\\
   &+\Big\|\int_{-k\tau+(N-1)h}^{-k\tau+Nh}\Big(g(s)-g\big((N-1)h\big)\Big)\mathrm{d}W_s\Big\|^2\\
   &\leq Ch^3\Big(1+\sup_{k,N} \|X^{-k\tau}_{-k\tau+Nh}\|_{4q-2}^{2q-1}\Big):=\beta h^3.
\end{align*}
Note that $\beta$ is bounded because of Proposition \ref{prop:use_bound}.
Then from \eqref{eqn:eqa} and the estimate above we have that
\begin{align*}
      \|e_N\|^2-\|e_{N-1}\|^2\leq 2\mathbb{E} \langle e_N-e_{N-1} ,e_N\rangle
   &\leq 2h\mathbb{E}\langle(-A+C_fI)e_N,e_N\rangle+3\epsilon_0^2 \|e_N\|^2+\frac{\beta h^3}{\epsilon^2_0}.
\end{align*}
Define $\hat{\alpha}:=\frac{\beta h}{(\lambda_1-C_f)^2}$. The inequality above can be rearranged to
\begin{align*}
    \Big(1+h(\lambda_1-C_f)\Big)\big(\|e_N\|^2-\hat{\alpha}\big)\leq \|e_{N-1}\|^2-\hat{\alpha}.
\end{align*}

By iteration and assuming $\hat{X}^{-k\tau}_{-k\tau}=X^{-k\tau}_{-k\tau}=\xi$ we have 
$$\|e_N\|^2\leq \Big(1-\frac{1}{1+h(\lambda_1-C_f)^N}\Big)\frac{\beta h}{(\lambda_1-C_f)^2}$$
Finally due to Assumption \ref{as:f_constant} (alternatively, Assumption \ref{as:f_Khasminskii-type}), we have $\|e_N\|^2\leq \frac{\beta h}{(\lambda_1-C_f)^2}.$ Then the assertion follows.
\end{proof}
\begin{corr}\label{corr:error} Under Assumption \ref{as:A} to \ref{as:f_tangent} and Assumption \ref{as:f_Khasminskii-type}, for any $h\in(0,1)$ with $\tau=nh$, $n\in\mathbb{N}$, there exists a constant $C$ that depends on $q,A,f,g$ and $d$ such that the exact and numerical random periodic solutions of \eqref{eq:RandM} given in Theorems \ref{thm:main1} and \ref{thm:main2} satisfy
\begin{align}\label{eq:error}
    \sup_{t\in \mathcal{T}^h} \big\|X^*_t-\hat{X}^*_t\big\|\leq C h^{\frac{1}{2}}.
\end{align}
\end{corr}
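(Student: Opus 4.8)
The plan is to obtain the estimate by a triangle-inequality decomposition that inserts the finite-horizon pull-backs $X^{-k\tau}_t$ and $\hat{X}^{-k\tau}_t$ between the two random periodic solutions, and then to combine the uniform strong error bound already proved in Theorem \ref{thm:error} with the two $L^2$-convergence statements of Theorems \ref{thm:main1} and \ref{thm:main2}. The corollary is thus essentially a stability-plus-consistency argument: the numerical and exact \emph{pull-backs} are uniformly $h^{1/2}$-close, and each converges (as the starting time recedes to $-\infty$) to its respective random periodic solution.

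First I would fix a grid point $t \in \mathcal{T}^h$ and, using $\tau = nh$, write $t = -k\tau + Nh$ with $N = t/h + kn$, so that letting $k \to \infty$ drives $N \to \infty$ simultaneously while keeping $t$ fixed. For each such $k$, the triangle inequality gives
\begin{align*}
\big\|X^*_t - \hat{X}^*_t\big\| \le \big\|X^*_t - X^{-k\tau}_t\big\| + \big\|X^{-k\tau}_t - \hat{X}^{-k\tau}_t\big\| + \big\|\hat{X}^{-k\tau}_t - \hat{X}^*_t\big\|.
\end{align*}
The middle term is exactly the strong error on the grid and is bounded by $Ch^{1/2}$ \emph{uniformly} in $k$ and $N$ by Theorem \ref{thm:error}. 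The first term tends to $0$ as $k \to \infty$ by Theorem \ref{thm:main1} (this is \eqref{eqn:lim_sde}), and the third term tends to $0$ as $k \to \infty$ by the $L^2$-convergence of the numerical pull-backs established in the proof of Theorem \ref{thm:main2}. Since the constant $C$ in Theorem \ref{thm:error} does not depend on $k$ or $N$, passing to the limit $k \to \infty$ on the right-hand side yields $\|X^*_t - \hat{X}^*_t\| \le Ch^{1/2}$ for every fixed $t \in \mathcal{T}^h$. Finally, because both $X^*$ and $\hat{X}^*$ are random periodic, the map $t \mapsto \|X^*_t - \hat{X}^*_t\|$ is $\tau$-periodic and its supremum over $\mathcal{T}^h$ is attained on the finite set $\mathcal{T}^h \cap [0,\tau)$; hence taking the supremum over $t$ preserves the bound and gives \eqref{eq:error}.

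The two limits are immediate from the cited theorems, so the only point requiring genuine care is the uniformity of $C$ in the time indices $k$ and $N$: this is exactly what the $\sup_{k,N}$ in the statement of Theorem \ref{thm:error} guarantees, and it is what allows the limit $k \to \infty$ to be taken term by term without degrading the rate. I do not expect any substantive obstacle beyond this bookkeeping, since all the analytic work (moment bounds, contractivity of the pull-back, and the one-step consistency estimate of Proposition \ref{prop:use_bound}) has already been absorbed into the earlier results.
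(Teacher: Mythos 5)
Your argument is correct and is exactly the paper's proof: the paper establishes the corollary by the same triangle-inequality decomposition through the pull-backs $X^{-k\tau}_t$ and $\hat{X}^{-k\tau}_t$, bounding the middle term uniformly in $k$ via Theorem \ref{thm:error} and letting the outer terms vanish as $k\to\infty$ via Theorems \ref{thm:main1} and \ref{thm:main2}. Your closing remark about periodicity of $t\mapsto\|X^*_t-\hat{X}^*_t\|$ is harmless but unnecessary, since the pointwise bound $Ch^{1/2}$ is already uniform in $t$.
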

\begin{proof}
The result simply follows from
\begin{align*}
   \big\|X^*_t-\hat{X}^*_t\big\|\leq \limsup_{k}\Big[\big\|X^*_t-\hat{X}^{-k\tau}_t\big\|+\big\|X^{-k\tau}_t-\hat{X}^{-k\tau}_t\big\|+\big\|\hat{X}^{-k\tau}_t-\hat{X}^*_t\big\|\Big]. 
\end{align*}
\end{proof}
\subsection{The periodic measure}\label{sec:periodic}
Theorem \ref{thm:main1} ensures the existence and uniqueness of the random periodic solution to Eqn. \eqref{eq:RandM}. Then the existence of the periodic measure $\rho$ associated with the random periodic semiflow generated by Eqn. \eqref{eq:RandM} can follow from the result in \cite{feng2020}. It can be defined as the law of random periodic solutions, ie,
\begin{equation}
    \rho_t(\Gamma)=P(X^*_t \in \Gamma) \ \forall t \in \mathbb{R},
\end{equation}
where $P$ is the transition probability defined in Definition \ref{def:measure}. Following the argument in Section 5 \cite{rpsnumerics2017}, 
the transition probability induces a semi-group defined by 
$$ P(t + s, s)\phi(\xi):=\int_{\mathbb{R}^d}P(t + s, s, \xi, \mathrm{d}\mu)\phi(\mathrm{\mu})=\mathbb{E}\phi\big({X}^{s}_{t+s}(\xi)\big),$$
where $\phi$ is bounded and measurable.

Similarly, for a fixed $h$, we can define the transition probability of the discrete semi-flow $\hat{u}$ from the backward Euler–Maruyama
scheme by
\begin{equation}
    \hat{P}(t + s, s, \xi, \Gamma):=\hat{P}(\{\omega:u(t+s,s,\omega)\xi\in \Gamma\})=\hat{P}(\hat{X}^{s}_{t+s}\in \Gamma)
\end{equation}
for any $s=jh$ and $t=ih$, $j\in \mathbb{Z}$, $i\in \mathbb{N}$. This newly defined transition probability also induces a semi-group defined by 
$$ \hat{P}(t + s, s)\phi(\xi):=\int_{\mathbb{R}^d}\hat{P}(t + s, s, \xi, \mathrm{d}\mu)\phi(\mathrm{\mu})=\mathbb{E}\phi\big({\hat{X}}^{s}_{t+s}(\xi)\big).$$
Using the result in \cite{feng2020} again yields that the measure function defined by 
$$\hat{\rho}_{s}(\Gamma):=\hat{P}(X^*_s \in \Gamma)\ \text{for }s\in \mathcal{T}^h$$ is periodic. This implies that
for any $s\in \mathcal{T}^h$ and $t=jh,\ j\in \mathbb{Z}$, we have
\begin{equation}
        \hat{\rho}_{s+\tau}=\hat{\rho}_{s},\ \ \int_{\mathbb{R}^d}\hat{P}(t + s, s, x, \Gamma)\hat{\rho}_s(\mathrm{d}x)=\hat{\rho}_{t+s}(\Gamma).
\end{equation}

\begin{thm}\label{thm:periodic}
Under Assumption \ref{as:A} to \ref{as:f_tangent} and Assumption \ref{as:f_Khasminskii-type}. Let $h\in (0,1)$ with $\tau=nh$, $n\in \mathbb{N}$. Then periodic measures $\rho_{\cdot}$ and $\hat{\rho}_{\cdot}$ generated by the exact solution of Eqn. \eqref{eq:SPDE} and the numerical approximation \eqref{eq:RandM} are weak limits of transition
probabilities, ie,
\begin{equation}
    P(r,-k\tau,\xi)\to \rho_{r}, \ \ \hat{P}(t,-k\tau,\xi)\to \hat{\rho}_{t}
\end{equation}
as $k\to \infty$ weakly, where $r\in \mathbb{R}$ and $t\in \mathcal{T}^h$. Moreover, there exists a constant $K$ depending on $q,A,f,g$ and $d$ such that
\begin{equation}
    \sup_{\mathcal{S}}\Big|\int_{\mathbb{R}^d}\phi(x)\rho_t(\mathrm{d}x)-\int_{\mathbb{R}^d}\phi(x)\hat{\rho}_t(\mathrm{d}x)\Big|\leq Kh^{\frac{1}{2}},
\end{equation}
on $\mathcal{T}^h$, where $\mathcal{S}:=\{\phi: \mathbb{R}^d\to \mathbb{R}, |\phi(x)-\phi(x)|\leq |x-y| \text{ and } |\phi(\cdot)|\leq 1\}.$
\end{thm}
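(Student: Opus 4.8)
The plan is to obtain both assertions as consequences of the strong ($L^2$) convergence already established, so that no new analytic machinery is needed. For the weak convergence of transition probabilities, recall that $P(r,-k\tau,\xi,\cdot)$ is by definition the law of $X_r^{-k\tau}(\xi)$ and $\rho_r$ is the law of the random periodic solution $X_r^*$. Theorem \ref{thm:main1} yields $\|X_r^{-k\tau}(\xi)-X_r^*\|\to 0$ as $k\to\infty$; since $L^2$-convergence implies convergence in probability and hence in distribution, for every bounded continuous $\psi$ we have $\mathbb{E}[\psi(X_r^{-k\tau}(\xi))]\to\mathbb{E}[\psi(X_r^*)]$ by the continuous mapping theorem and bounded convergence, that is, $P(r,-k\tau,\xi)\to\rho_r$ weakly. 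The same argument applied to the numerical pull-backs, using the $L^2$-convergence $\|\hat{X}_t^{-k\tau}(\xi)-\hat{X}_t^*\|\to 0$ obtained in the proof of Theorem \ref{thm:main2}, gives $\hat{P}(t,-k\tau,\xi)\to\hat{\rho}_t$ weakly for every $t\in\mathcal{T}^h$.

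For the quantitative estimate I would exploit that $X_t^*$ and $\hat{X}_t^*$ are realized on the same probability space, driven by the same two-sided Wiener process, so they constitute a natural coupling of $\rho_t$ and $\hat{\rho}_t$. Fixing $t\in\mathcal{T}^h$ and $\phi\in\mathcal{S}$, the $1$-Lipschitz property of $\phi$ together with the Jensen/Cauchy--Schwarz bound $\mathbb{E}|Z|\le\|Z\|$ gives
\begin{align*}
\Big|\int_{\mathbb{R}^d}\phi\,\mathrm{d}\rho_t-\int_{\mathbb{R}^d}\phi\,\mathrm{d}\hat{\rho}_t\Big|=\big|\mathbb{E}[\phi(X_t^*)]-\mathbb{E}[\phi(\hat{X}_t^*)]\big|\le\mathbb{E}\big[|X_t^*-\hat{X}_t^*|\big]\le\|X_t^*-\hat{X}_t^*\|.
\end{align*}
The right-hand side does not involve $\phi$, so the supremum over $\phi\in\mathcal{S}$ is preserved, and Corollary \ref{corr:error} supplies $\|X_t^*-\hat{X}_t^*\|\le Ch^{1/2}$ uniformly in $t\in\mathcal{T}^h$; taking $K:=C$ completes the proof.

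There is no substantial obstacle: the strong error bound of Corollary \ref{corr:error} dominates the Lipschitz (Wasserstein-type) distance, so the weak estimate is a direct corollary of strong convergence. The only points demanding care are the routine upgrade from $L^2$-convergence of the pull-backs to weak convergence of their laws, and the observation that the uniformity over the test class $\mathcal{S}$ holds precisely because the majorant $\|X_t^*-\hat{X}_t^*\|$ is independent of $\phi$. The periodicity relations $\rho_{s+\tau}=\rho_s$ and $\hat{\rho}_{s+\tau}=\hat{\rho}_s$ together with the invariance identities have already been furnished by the construction preceding the statement, so they need not be re-derived.
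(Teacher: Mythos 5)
Your proposal is correct and follows essentially the same route as the paper, which itself only remarks that the result "follows a similar argument as in the proof of Theorem 5.2" of the cited reference: weak convergence of the transition probabilities is the distributional shadow of the $L^2$ pull-back convergence in Theorems \ref{thm:main1} and \ref{thm:main2}, and the $Kh^{1/2}$ bound over $\mathcal{S}$ comes from coupling $\rho_t$ and $\hat{\rho}_t$ through $X_t^*$ and $\hat{X}_t^*$ on the common probability space and invoking the strong error estimate of Corollary \ref{corr:error}.
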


With Theorem \ref{thm:main1}, Theorem \ref{thm:main2} and Theorem \ref{thm:error}, the proof simply follows a similar argument as in the proof of Theorem 5.2 \cite{rpsnumerics2017}.

%to split the term into two terms:
% \begin{align}
%     \begin{split}
%       & 2\mathbb{E}\Big[\big((X^{-k\tau}_{-k\tau+jh})^T\frac{e^{Ah}}{1-\alpha h}-(\hat{X}^{-k\tau}_{-k\tau+jh})^T\frac{I+Ah}{1-\alpha h}\big)B_1\Big]\\
%       &\leq 2\mathbb{E}\Big[(X^{-k\tau}_{-k\tau+jh})^T\big(\frac{e^{Ah}}{1-\alpha h}-\frac{I+Ah}{1-\alpha h}\big)B_1\Big]+\mathbb{E}\Big[\big(X^{-k\tau}_{-k\tau+jh}-\hat{X}^{-k\tau}_{-k\tau+jh})^T\frac{I+Ah}{1-\alpha h}\big)B_1\Big].
%     \end{split}
% \end{align}

\bibliographystyle{unsrt}  
%\bibliography{references}  %%% Remove comment to use the external .bib file (using bibtex).
%%% and comment out the ``thebibliography'' section.

%%% Comment out this section when you \bibliography{references} is enabled.
\section{Numerical analysis}
In this section, we consider the following one-dimensional SDE example
\begin{align}\label{eqn:sde_eg}
\mathrm{d}X_t^{t_0}=-10\pi X_t^{t_0}\mathrm{d}t+\sin{(2\pi t)}\mathrm{d}t+0.05\mathrm{d}W_t. 
\end{align}
It is easily verified that the associated period is $1$ and Assumption \ref{as:A} to \ref{as:f_tangent} and Assumption \ref{as:f_Khasminskii-type} are fulfilled with $\lambda_1=10$, $C_f=2$ and $\sigma=0.05$. Thus \eqref{eqn:sde_eg} has a random periodic solution according to Theorem \ref{thm:main1} and its backward Euler–Maruyama simulation also admit a random periodic path. First, let us show, the scheme converges to its random periodic path regardless its initial condition. To achieve this, we choose the time grid between $t_0=-10$ and $T=0$ with stepsize $0.05$, generate a Brownian realisation on the time grid, and set two initial conditions to be $0.2$ and $-0.3$. Two simulated paths can then be obtained in Figure \ref{fig:initial} by applying the backward Euler–Maruyama method in \eqref{eq:RandM} iteratively on the time grid, with given initial condition and shared Brownian realisation. As shown in Figure \ref{fig:initial}, two paths coincide shortly after the start. Note in theory $\hat{X}_{t}^{*}=\hat{X}_{t}^{-\infty}$, but we take pull-back time $-10$ as this is already enough to generate a good convergence to the random periodic paths for $t\geq -9$.
\begin{figure}
\centering
\includegraphics[width=0.65\textwidth]{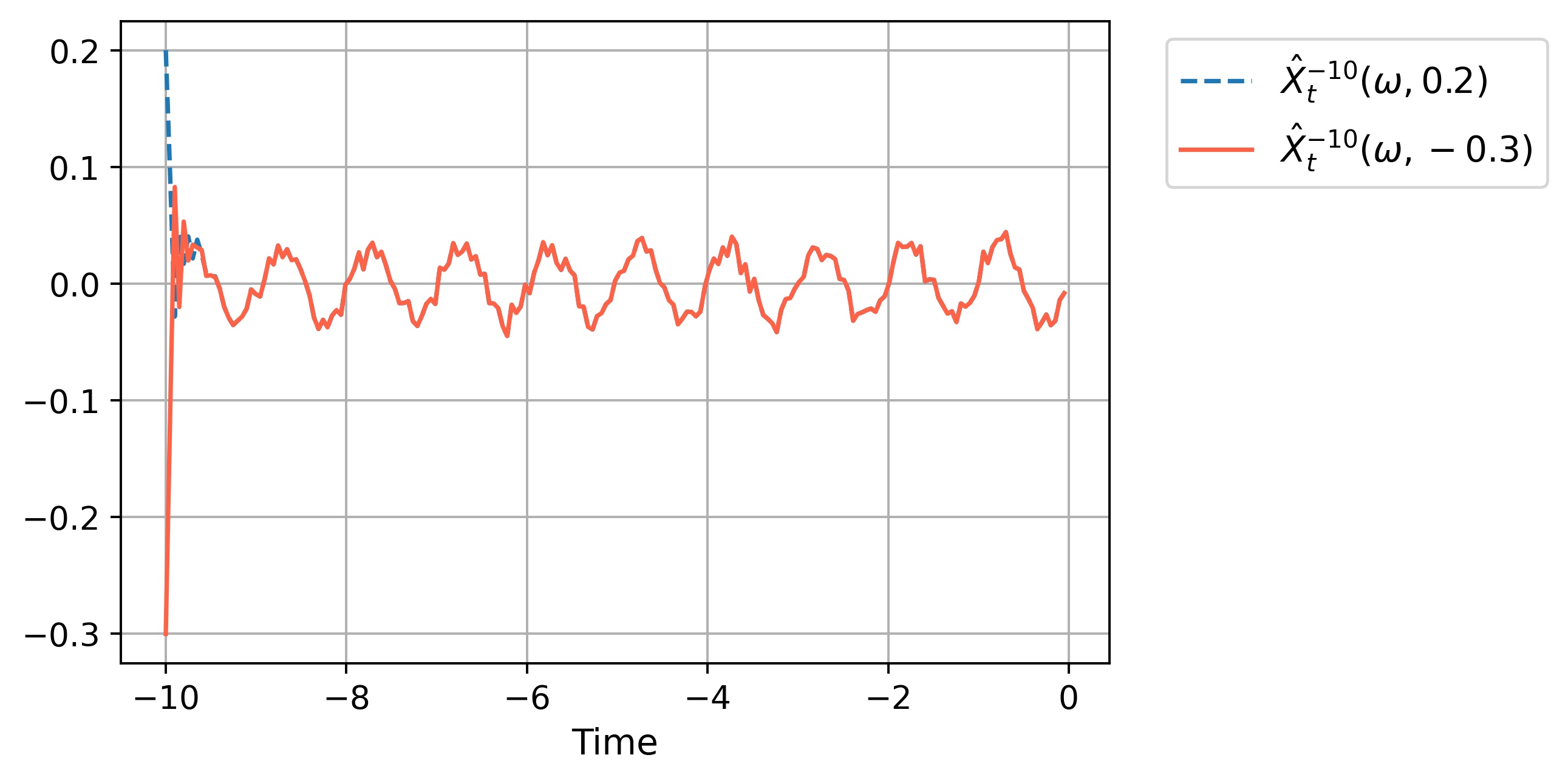}
\caption{Two paths generated by backward Euler–Maruyama method from differential initial conditions. \label{fig:initial}
}
\end{figure}

As discussed in \cite{rpsnumerics2017}, there are two ways to demonstrate the periodicity. The easer approach is to simulate the processes $\hat{X}_{t}^{*}(\omega)=X^{-30}_t(\omega,0.2)$for $t\in [-4,-1]$ and $\hat{X}_{t}^{*}(\theta_{-1}\omega)=X^{-30}_t(\theta_{-1}\omega,0.2)$ for $t\in [-3,0]$. We can observe that the two segmented processes are identical in Figure \ref{fig:firstmethod} due to $\hat{X}_{t-1}^{*}(\omega)=\hat{X}_{t}^{*}(\theta_{-1}\omega)$.

\begin{figure}
\centering
\includegraphics[width=0.65\textwidth]{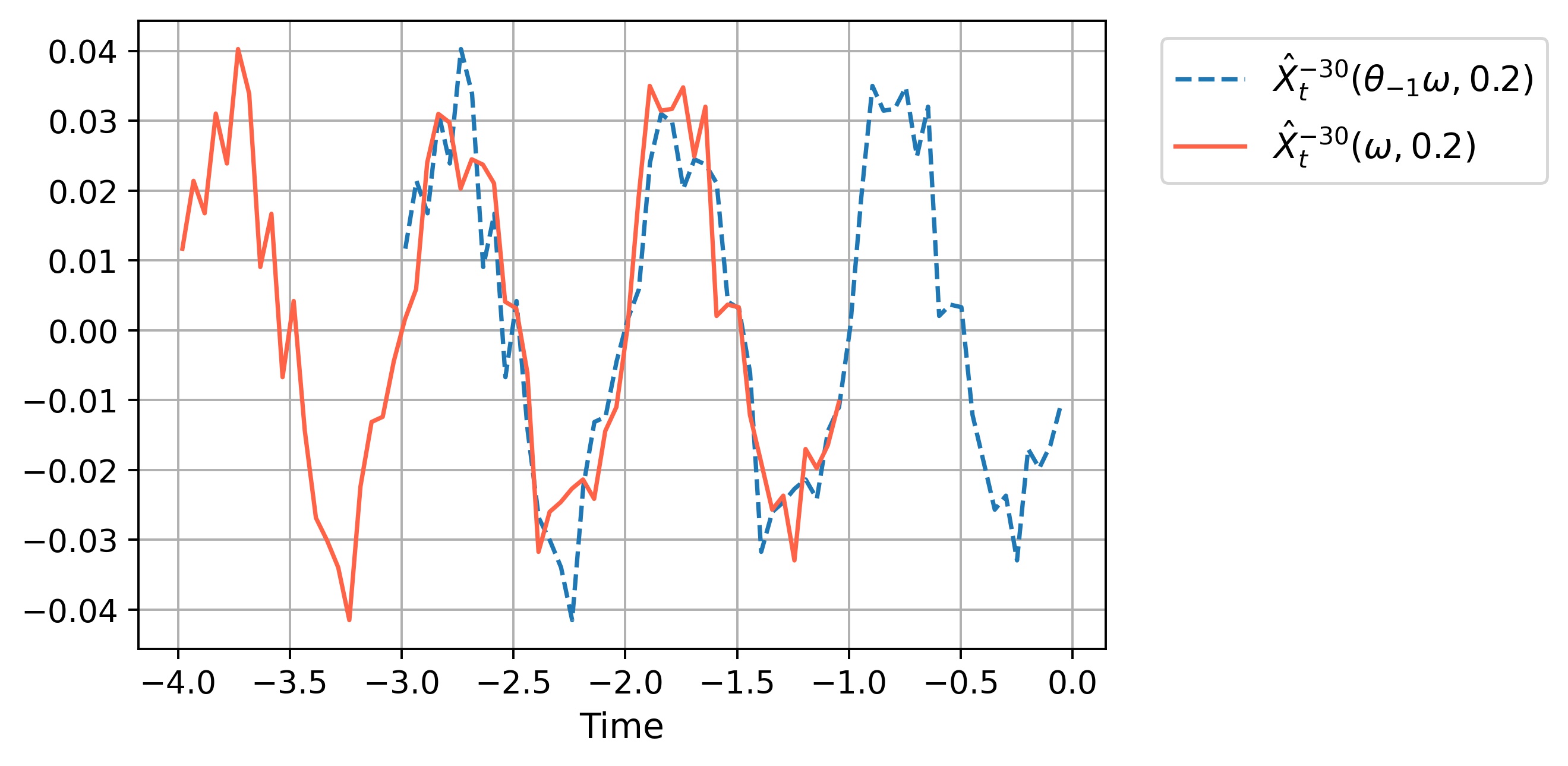}
\caption{Two paths with generated by backward Euler–Maruyama method on different realisations. \label{fig:firstmethod}
}
\end{figure}

\begin{figure}
\centering
\includegraphics[width=0.5\textwidth]{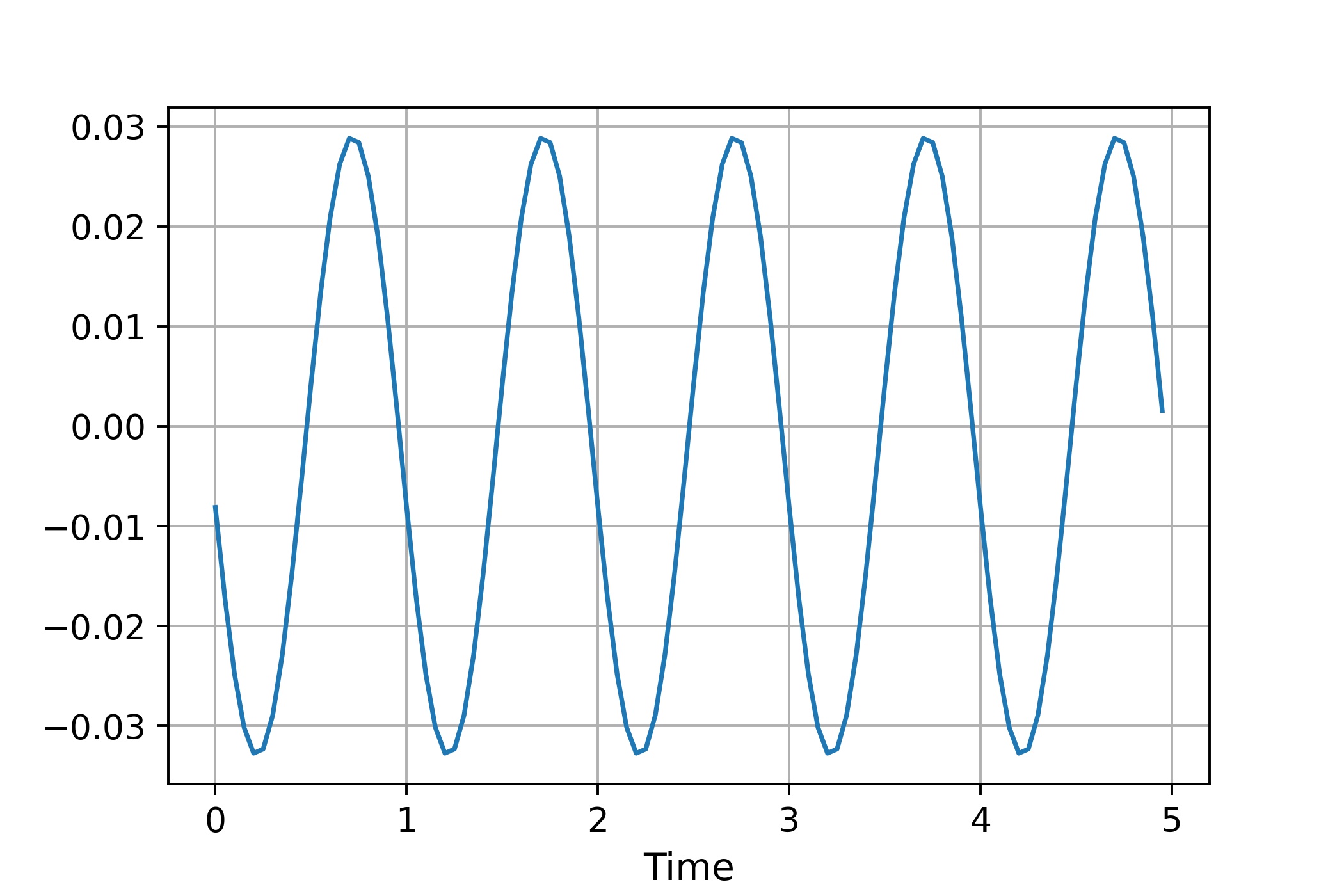}
\caption{The pull-back path $\hat{X}^{-30}(t,\theta_{-t}\omega)$ generated by backward Euler–Maruyama method. \label{fig:secondmethod}
}
\end{figure}

The other way to check random periodicity of path $X$ with period $\tau$ is to verify whether or not $\hat{X}^{*}(t,\theta_{-t}\omega)$ is periodic with period $\tau$. 
%This can be easily derived from the definition of random periodic solution in Definition \ref{feng-zhao1}. {\color{red} That is, if $\hat{X}^{*}(t,\theta_{-t}\omega)$ is periodic with period $\tau$, which implies that $\hat{X}^{*}(t+\tau,\theta_{-(t+\tau)}\omega)=\hat{X}^{*}(t,\theta_{-t}\omega)$. Note that there exists a $\tilde{\omega}$ such that $\omega=\theta_{t+\tau}\tilde{\omega}$, we then have that $\hat{X}^{*}(t+\tau,\tilde{\omega})=\hat{X}^{*}(t,\theta_\tau\tilde{\omega})$.} 
To test it, we need to consider $X_t^{t_0}(\theta_{-t}\omega)$. Note that for any fixed $r\in \mathbb{R}$ we have that
\begin{align}\label{eqn:pullbackpath}
\begin{split}
\mathrm{d}X_t^{t_0}(\theta_{-r}\omega)&=-10\pi X_t^{t_0}(\theta_{-r}\omega)\mathrm{d}t+\sin{(2\pi t)}\mathrm{d}t+0.05\mathrm{d}W_{t}(\theta_{-r}\omega)\\
&=-10\pi X_t^{t_0}(\theta_{-r}\omega)\mathrm{d}t+\sin{(2\pi t)}\mathrm{d}t+0.05\mathrm{d}W_{t-r}.
\end{split}
\end{align}
Now set $t_0=0$, and $X_{0}^{0}(\theta_{-r}\omega)=x_0$. For each fixed $r$, we simulate the path of Eqn. \eqref{eqn:pullbackpath} through the backward Euler-Maruyama method up to $t=r$. Then we obtain the evaluation of $\hat{X}^{0}(r,\theta_{-r}\omega)$. To allow convergence, we look at the path pattern from $t=2$ to $t=5$ in Figure \ref{fig:secondmethod}. Apparently we have obtained a periodic pull-back path as expected, which in turn shows the random periodicity of the original path.

Finally, we test the order of convergence of the backward Euler-Maruyama method and compare the performance with (forward) Euler-Maruyama method. For its approximation we first generated
a reference solution with a small step size of $h_{\text{ref}}=2^{-15}$. This reference solution was
then compared to numerical solutions with larger step sizes $h\in \{2^{-i}: i=4,5,6,7,8\}$. The error plot is shown in Figure \ref{fig:errorplot}. We plot the Monte Carlo
estimates of the root-mean-squared errors versus the underlying temporal step size,
i.e., the number i on the x-axis indicates the corresponding simulation is based on
the temporal step size $h = 2^{-i}$. Both methods give the order of convergence above $1$, which is beyond the theoretical order of convergence. When the stepsize is large, say, $h=2^{-4}$, the Euler-Maruyama method has the error 0.048 which is almost five times of the error 0.011 from the backward Euler-Maruyama method. Indeed if we relax the stepsize to $h=2^{-3}$, the Euler-Maruyama diverges while the backward Euler-Maruyama method still converges as expected. This further supports Theorem \ref{thm:main2} and the advantage of backward Euler-Maruyama method: the backward Euler-Maruyama method converges regardless the size of stepsize ($h<1$).

\begin{figure}
\centering
\includegraphics[width=0.5\textwidth]{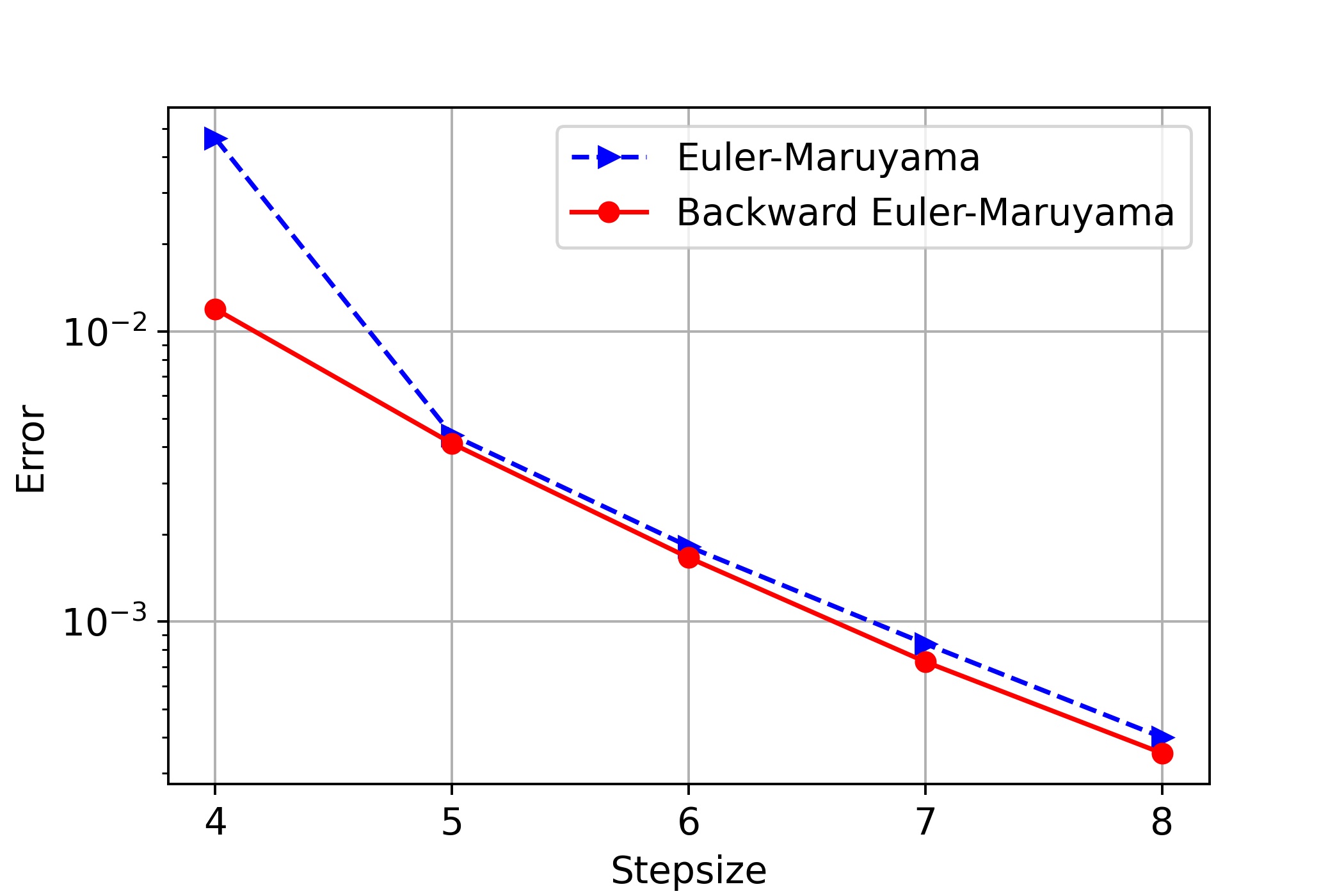}
\caption{ Numerical experiment for simulating the random periodic solution of SDE \eqref{eqn:sde_eg}: Step sizes versus $L^2$ error. \label{fig:errorplot}
}
\end{figure}
\section*{Acknowledgements}
This work is supported by the Alan Turing Institute for funding this work under EPSRC grant EP/N510129/1 and EPSRC for funding though the project EP/S026347/1, titled 'Unparameterised multi-modal data, high order signatures, and the mathematics of data science'. The author would also acknowledge Michael Scheutzow for useful discussion.

\bibliographystyle{plain}

\bibliography{bibfile}

\end{document}